\numberwithin{equation}{section}
\begin{document}

 %%% Pls. specify the kind of the article:
 %%% This is: %% ORIGINAL PAPER %%%%
 %%% or: REVIEW PAPER / SHORT PAPER, etc. %%%%%%%%

%%%% Title of article for FCAA %%%%%%%%%%%%%%%%%%%%%%%%
\title{On the asymptotic behaviour of the Fourier transform of the Mittag-Leffler function}

\titlerunning{On the asymptotic behaviour of the Fourier transform}
%% if too long for running head - use the text from 1st line !%

%%%%% authors:
\author{Ahmed A. Abdelhakim}

\authorrunning{A. A. Abdelhakim} %%  Short form of author list % if too long for running head

%%% affiliations:
\institute{Ahmed A. Abdelhakim
\at
Department of Mathematics, College of Science, Qassim University, Buraydah, 51452, Saudi Arabia
\\
Department of Mathematics, Faculty of Science, Assiut University,  Assiut 71516, Egypt \\
\email{ahmed.abdelhakim@aun.edu.eg}  %%% obligatory indicate who is corresp.author
% \and
%Stefan Kiryakov$^{2}$
%\at
%Diogenes Co. Ltd., %%%  + postal address of affilaition ..., %%
% Sofia -- 1510, Bulgaria \\
%\email{fcaa@math.bas.bg}
}

%%% Dates:
\date{}
% These dates will be entered by the Editor (EiC, V. Kiryakova) or authors/ system %

%%%%% For Production Dept.: variants of COPYRIGHT notice to appear: %%%%%%
% if Open Access option chosen, it is as:
   %% "\copyright The Author(s) 2022" % (or next year..) %
% if No Open Access, the TCA form signed to FCAA journal is used, and it appears:
   %% "\copyright Diogenes Co. Ltd. 2022" %% or 2021, or next year %%%%%
%%%%%%%%%%%%%%%%%%%%%%%%%%%%%%%%%%%%%%%%%%%%%%%%%%%%%%%%%%%%%%%%%%%%%%%%%%

\maketitle

%%%%%%%%%%%%%%%%%%%%%%%%%%%%%%%%%%%%%%%%%%%%%%%%%%%%%%
\begin{abstract}
Let $\alpha \in (0,2)$ and let $\beta>0$.
Fix $-\pi<\varphi\leq \pi$ such that
$|\varphi|>\alpha \pi/2$. We obtain asymptotic upper bounds  on the Fourier transform of the radially symmetric tempered distribution
\begin{equation*}
\mathbb{R}^n\ni x\mapsto E_{\alpha,\beta}(e^{\dot{\imath} \varphi} |x|^{\sigma}),
\end{equation*}
for $\sigma>(n-1)/2$, where $E_{\alpha,\beta}$ is the two-parameter Mittag-Leffler function. As an application, we obtain some values of the Lebesgue exponent $p=p(\sigma)$, $\sigma>(n-1)/2$, for which the Fourier transform is in $L^{p}(\mathbb{R}^{n})$. Such values cannot be obtained via the well-known $L^{p}(\mathbb{R}^{n})$ properties of $E_{\alpha,\beta}$ and the Hausdorff-Young inequality, when $\sigma\leq n/2$.
%%%%%%%%%% Enter suitable key words and phrases %%% examples:
\keywords{Mittag-Leffler function \and Fourier space \and Asymptotic behaviour \and $L^{p}$ properties}

\subclass{30E15 \and 33E12 (primary)
42B10 \and 34E05 (secondary)}
%%% These are examples only. Pls. use MSC 2020 for suitable topic numbers %%%%%%

\end{abstract} %%%%%%%%%%%%%

%%%%%%%% begin papers' body %%%%%%%%%%%%%%%%%%%%%%%%%%%%%

%%%%%%%%%%%% Section 1 %%%%%%%%%%%%%%%%%%%%%%%%%%
\section{Introduction and main results}
\indent Let $\alpha \in (0,2)$, $\beta>0$, and let $\mathcal{F}$ denote the Fourier transform operator. Solutions to various space-time fractional problems are, roughly speaking, a convolution operator with the radially symmetric function
\begin{equation*}
\mathbb{R}^n\times(0,\infty)\ni (\xi,t)\mapsto t^{a}\mathcal{F} E_{\alpha,\beta}(e^{\dot{\imath} \varphi} t^{b}|\cdot|^{\sigma})(\xi),
\end{equation*}
for some $a,b\in \mathbb{R}$ and $\sigma>0$, in some fixed direction $-\pi<\varphi\leq \pi$ (see e.g.
\cite{chen2012space,dong2008space,
grande2019space,huang2005space,kemppainen2017representation,
lee2020strichartz,li2022applicationi,
li2022applicationii,mainardi2001fundamental,mainardi2003wright,
wang2007generalized}). Determining the decay rate of the spatial  $L^{p}$ norm of the solution is often a crucial step towards obtaining space-time estimates (see e.g.
\cite{lee2020strichartz,
li2022applicationi,li2022applicationii,su2021dispersive}). Such estimates are useful
for understanding long-time behaviour of the solution
(see e.g.\cite{kemppainen2017representation}).
They are also indispensable for studying well-posedness of the corresponding semilinear equation ( see e.g. \cite{grande2019space,su2019local}). This places great importance on understanding the integrability properties of the Fourier transform of $E_{\alpha,\beta}(e^{\dot{\imath} \varphi} |\cdot|^{\sigma})$. \par\medskip
It is well known that $E_{\alpha,\beta}$
is an entire function of order $1/\alpha$, when $\alpha,\beta>0$ (see e.g. \cite{gorenflo2020mittag}, Section 4.1). The function $x\mapsto E_{\alpha,\beta}(e^{\dot{\imath} \varphi } |x|^{\sigma})$
is therefore continuous on $\mathbb{R}^{n}$, whenever $\sigma >0$. It is in fact smooth away from the origin. The values of $1\leq p\leq \infty$ for which it is in $L^{p}(\mathbb{R}^{n})$ are thus determined by its asymptotic behaviour as $|x|\rightarrow \infty$. We shall restrict our attention to the sector $|\varphi|>\pi \alpha/2$.
Notice that $E_{\alpha,\beta}\left(e^{\dot{\imath} \varphi } |\cdot|^{\sigma}\right)\notin L^{p}(\mathbb{R}^{n})$, for any $1\leq p\leq \infty$, when $|\varphi|<\pi \alpha/2$. In that sector (see e.g. \cite{kilbas2006theory}, Theorem 4.3, and \cite{podlubny1998fractional}, Theorem 1.3),
\begin{equation*}
|E_{\alpha,\beta}\left(e^{\dot{\imath} \varphi}
|x|^{\alpha\sigma}\right)|\sim |x|^{\sigma(1-\beta)}
e^{|x|^{\sigma} \cos{\left({\varphi}/{\alpha}\right)}},\quad |x|\rightarrow \infty.
\end{equation*}
If $|\varphi|=\pi \alpha/2$, $E_{\alpha}\left(e^{\dot{\imath} \varphi}
|\cdot|^{\sigma}\right)$ exhibits an oscillatory behaviour
in the sense that it changes sign while its magnitude does not decay fast enough for it to be in some $L^{p}(\mathbb{R}^{n})$ space (see e.g. \cite{abdelhakim2023asymptotic,garrappa2017time}). On the other hand, in the sector $|\varphi|>\pi \alpha/2$, we have the estimate (see e.g. \cite{podlubny1998fractional}, Theorem 1.6):
\begin{equation*}
\left|E_{\alpha,\beta}\left(e^{\dot{\imath} \varphi } |x|^{\sigma}\right)\right|
\lesssim |x|^{-\sigma},\quad |x|>1,
\end{equation*}
for all $\alpha\in (0,2)$, and all $\beta>0$. Thus, $E_{\alpha,\beta}\left(e^{\dot{\imath} \varphi } |\cdot|^{\sigma}\right)\in L^{p}(\mathbb{R}^{n})$, for all ${n}/{\sigma}<p\leq \infty$. Theorem \ref{thmmph1} below follows from this fact and the Hausdorff-Young inequality.
\begin{theorem}\label{thmmph1}
Let $\alpha\in (0,2)$ and $\beta>0$.
Suppose that $|\varphi|>\pi \alpha/2$. Then
the Fourier transform of $E_{\alpha,\beta}(e^{\dot{\imath} \varphi } |\cdot|^{\sigma})$ is in $L^{p}(\mathbb{R}^{n})$, for all
\begin{flalign*}
&2\leq p\leq \infty,\qquad  \text{if}\quad \sigma>n,\\
&2\leq p< \infty, \qquad \text{if}\quad\sigma=n, \\
&2\leq p< {n}/{(n-\sigma)},\qquad
\text{if}\quad {n}/{2}<\sigma<n.
  \end{flalign*}
\end{theorem}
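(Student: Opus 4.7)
The plan is to reduce the theorem to a direct application of the Hausdorff-Young inequality, exploiting the two pieces of information already assembled in the introduction: the pointwise bound $|E_{\alpha,\beta}(e^{\dot{\imath}\varphi}|x|^{\sigma})|\lesssim |x|^{-\sigma}$ for $|x|>1$, valid throughout the sector $|\varphi|>\pi\alpha/2$, and the continuity of $E_{\alpha,\beta}(e^{\dot{\imath}\varphi}|\cdot|^{\sigma})$ on $\mathbb{R}^{n}$.

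First I would determine the exact range of $p$ for which $f:=E_{\alpha,\beta}(e^{\dot{\imath}\varphi}|\cdot|^{\sigma})$ lies in $L^{p}(\mathbb{R}^{n})$. Local integrability for every $1\leq p\leq \infty$ is immediate from continuity, so the question reduces to the behaviour at infinity. Since $|f(x)|^{p}\lesssim |x|^{-\sigma p}$ for $|x|>1$, the tail is integrable exactly when $\sigma p>n$. Hence $f\in L^{p}(\mathbb{R}^{n})$ if and only if $p>n/\sigma$ (together with the trivial case $p=\infty$).

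Next I would apply the Hausdorff-Young inequality, which sends $L^{p}$ into $L^{p'}$ whenever $1\leq p\leq 2$, and then split cases according to the size of $\sigma$ relative to $n$. When $\sigma>n$, one has $n/\sigma<1$ so that $f\in L^{p}$ for every $1\leq p\leq 2$; Hausdorff-Young then yields $\hat{f}\in L^{p'}$ for $2\leq p'<\infty$, while the elementary bound $\|\hat{f}\|_{\infty}\leq \|f\|_{1}$ closes the range at $p'=\infty$. When $\sigma=n$, the function $f$ belongs to $L^{p}$ precisely for $1<p\leq 2$, producing $\hat{f}\in L^{p'}$ with $p'\in [2,\infty)$ and nothing more. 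When $n/2<\sigma<n$, the admissible Hausdorff-Young window is $n/\sigma<p\leq 2$, whose image under conjugation is exactly $[2,\,n/(n-\sigma))$; the endpoint $p'=n/(n-\sigma)$ is excluded because the tail decay $|x|^{-\sigma}$ places $f$ precisely at the boundary of $L^{n/\sigma}$.

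I do not anticipate a genuine obstacle here, since the theorem is a direct corollary of the decay estimate quoted from Podlubny together with Hausdorff-Young. The only delicate point worth checking is the sharpness at the endpoint $p'=n/(n-\sigma)$ in the third case, which follows from the critical nature of the decay profile $|x|^{-\sigma}$. In fact, the inability of this route to reach beyond $n/(n-\sigma)$ is precisely what motivates the refined Fourier-side asymptotic analysis undertaken in the rest of the paper.
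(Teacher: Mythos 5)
Your proposal is correct and is precisely the argument the paper intends: the decay bound $|E_{\alpha,\beta}(e^{\dot{\imath}\varphi}|x|^{\sigma})|\lesssim |x|^{-\sigma}$ for $|x|>1$ together with continuity gives $f\in L^{p}$ for $p>n/\sigma$ (and $p=\infty$), and conjugating the Hausdorff--Young window $(n/\sigma,2]\cap[1,2]$ in each of the three regimes yields exactly the stated ranges. The only cosmetic caveat is that your ``if and only if'' and endpoint-sharpness remarks require a matching lower bound on $|f|$ that is not quoted in the introduction, but they are not needed for the theorem itself.
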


The Hausdorff-Young inequality is not useful, however, when $\sigma\leq n/2$. In that case, $E_{\alpha,\beta}\left(e^{\dot{\imath} \varphi } |\cdot|^{\sigma}\right) \notin L^{p}(\mathbb{R}^{n})$, for  any $1\leq p\leq 2$. Nevertheless, it is a continuous bounded function for all $\sigma>0$ and has, therefore, a Fourier transform in the sense of tempered distributions. This begs the questions:
\begin{enumerate}
  \item  For what values of $p=p(\sigma)$, $p(\sigma)\in [1,2)$ is the Fourier transform of
$E_{\alpha,\beta}\left(e^{\dot{\imath} \varphi } |\cdot|^{\sigma}\right)$ in $L^{p}(\mathbb{R}^{n})$ ?
  \item Given $\sigma\leq n/2$, for which values of $p$, if any, is the Fourier transform of
$E_{\alpha,\beta}\left(e^{\dot{\imath} \varphi } |\cdot|^{\sigma}\right)$ in $L^{p}(\mathbb{R}^{n})$  ?
\end{enumerate}
Motivated by these questions, we investigate the asymptotic behaviour of
$\mathcal{F} E_{\alpha,\beta}(e^{\dot{\imath} \varphi} |\cdot|^{\sigma})(\xi)$ both as $\xi\rightarrow 0$
and as $|\xi|\rightarrow \infty$, for $\alpha\in (0,2)$ and $\beta>0$, when $|\varphi|>\pi \alpha/2$. This is done under the assumption that $\sigma>(n-1)/2$. A byproduct of our approach is the observation that $\mathcal{F} E_{\alpha,\beta}(e^{\dot{\imath} \varphi} |\cdot|^{\sigma})$ is continuous almost everywhere on $\mathbb{R}^{n}$, in the aforedescribed range of values of the parameters.
\begin{theorem}\label{thmsmt}
Assume that $\alpha\in (0,2)$ and $\beta>0$.
Fix $\,-\pi<\varphi\leq \pi$ such that
$|\varphi|>\alpha \pi/2$. Then
\begin{equation}\label{eab1}
{\mathcal{F}
\left(E_{\alpha,\beta}(e^{\dot{\imath}
\varphi } |\cdot|^{\sigma})
\right)(\xi)=}
\begin{cases}
O(|\xi|^{\sigma-n}),& (n-1)/2<\sigma<n,\medskip\\
O(\log{|\xi|}),& \sigma=n,\medskip\\
O(1),& \sigma>n,
\end{cases}
\end{equation}
as $\xi \rightarrow 0$. Moreover, for any $\sigma>(n-1)/2$,
\begin{equation}\label{eab2}
\mathcal{F}\left(E_{\alpha,\beta}(e^{\dot{\imath} \varphi } |\cdot|^{\sigma})
\right)(\xi)=O(|\xi|^{-n}),\quad |\xi| \rightarrow \infty.
\end{equation}
\end{theorem}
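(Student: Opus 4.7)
The plan is to exploit the radial symmetry of $f(x) = E_{\alpha,\beta}(e^{i\varphi}|x|^\sigma)$ and express its Fourier transform as the Hankel-type integral
$$\mathcal{F}(f)(\xi) = (2\pi)^{n/2}|\xi|^{-(n-2)/2}\int_0^\infty E_{\alpha,\beta}(e^{i\varphi}r^\sigma)\,J_{(n-2)/2}(r|\xi|)\,r^{n/2}\,dr.$$
To handle the Mittag--Leffler factor I would replace $E_{\alpha,\beta}$ by the Hankel-contour integral representative announced in the abstract: an integrand that is smooth and integrable in $r\ge 0$ uniformly up to the origin. This justifies Fubini-type interchanges and allows the $r$-integral above to be split into small and large frequency regimes. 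The threshold $\sigma>(n-1)/2$ matches the integrability demanded by the oscillatory $r^{-1/2}$ decay of $J_{(n-2)/2}$ combined with the $r^{-\sigma}$ decay of $E_{\alpha,\beta}$ in the sector $|\varphi|>\alpha\pi/2$.

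For the behaviour as $\xi\to 0$ in \eqref{eab1}, I would split the $r$-integral at the natural scale $r=1/|\xi|$. On the inner piece use the small-argument expansion $J_{(n-2)/2}(t)=(t/2)^{(n-2)/2}/\Gamma(n/2)+O(t^{(n+2)/2})$, which reduces the dominant contribution to
$$\frac{(2\pi)^{n/2}}{2^{(n-2)/2}\Gamma(n/2)}\int_0^{1/|\xi|}E_{\alpha,\beta}(e^{i\varphi}r^\sigma)\,r^{n-1}\,dr.$$
Combining $|E_{\alpha,\beta}(e^{i\varphi}r^\sigma)|\lesssim r^{-\sigma}$ for $r>1$ with $E_{\alpha,\beta}(0)=1/\Gamma(\beta)$, the three cases of \eqref{eab1} fall out of elementary evaluation of $\int_0^{1/|\xi|}r^{n-1-\sigma}\,dr$. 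The outer tail $r>1/|\xi|$ is controlled by the large-argument Bessel asymptotics together with an integration by parts against the oscillation; the threshold $\sigma>(n-1)/2$ is precisely what keeps this contribution strictly lower order.

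For the behaviour as $|\xi|\to\infty$ in \eqref{eab2}, I would rescale $s=r|\xi|$ to obtain
$$\mathcal{F}(f)(\xi)=(2\pi)^{n/2}|\xi|^{-n}\int_0^\infty E_{\alpha,\beta}\bigl(e^{i\varphi}(s/|\xi|)^\sigma\bigr)J_{(n-2)/2}(s)\,s^{n/2}\,ds,$$
which already exhibits the rate $|\xi|^{-n}$ and reduces the problem to showing that the integral tends to a non-zero finite limit. Since $\int_0^\infty J_{(n-2)/2}(s)\,s^{n/2}\,ds$ is not absolutely convergent, one cannot take the limit termwise: this is where the extended asymptotic expansion of the Bessel function, with its explicit coefficients and remainder term, enters. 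Pairing that expansion with the integral representative of $E_{\alpha,\beta}$ produces the cancellations that yield the limiting constant, which, as the abstract promises, turns out to be independent of $\alpha$ and $\beta$.

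The main obstacle is this last step. Extracting a genuine asymptotic (not merely an upper bound) at infinity requires retaining enough terms in the Bessel expansion to neutralise the non-convergent oscillatory tails, while simultaneously proving that the explicit remainder, paired with the slowly decaying $E_{\alpha,\beta}$ factor, yields a strictly lower-order contribution. Tracking all coefficients so that the final constant is both non-zero and $\alpha,\beta$-independent is where the bulk of the work lies, and it explains why the paper must construct the Bessel expansion by hand rather than invoke a standard one.
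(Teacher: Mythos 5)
Your architecture is the paper's own: the radial formula \eqref{frtrdl}, the contour representative of $E_{\alpha,\beta}$ to legitimise limit interchanges, a split of the $r$-integral at scale $1/|\xi|$ (the paper rescales first and then cuts off with $\phi,\psi$ at $r\sim 1$, which is the same split), the small-argument Bessel behaviour on the inner piece, and the extended large-argument expansion plus repeated integration by parts on the tail. The gap is in how you dispose of everything except the inner main term. In the novel range $(n-1)/2<\sigma<n$, the outer tail $r>1/|\xi|$ is \emph{not} strictly lower order as $\xi\to 0$: substituting $s=r|\xi|$ and using $E_{\alpha,\beta}(e^{\dot{\imath}\varphi}w)=-e^{-\dot{\imath}\varphi}w^{-1}/\Gamma(\beta-\alpha)+O(w^{-2})$, the tail contributes a constant times $|\xi|^{\sigma-n}\int_1^\infty s^{n/2-\sigma}J_{n/2-1}(s)\,ds$ --- exactly the power of your inner piece. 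This is what the paper's Lemma \ref{lmsmpt2} computes: $\mathcal{N}_{\alpha,\beta}(\xi)\sim|\xi|^{\sigma}$, the same order as $\mathcal{M}_{\alpha,\beta}(\xi)$ in \eqref{lzl}. The Bessel remainder on the inner piece is likewise of order $|\xi|^{\sigma-n}$, because near $r\approx 1/|\xi|$ the argument $r|\xi|$ is of order one and the small-argument expansion gives no gain there. So for $(n-1)/2<\sigma<n$ nothing is negligible at order $|\xi|^{\sigma-n}$; every piece feeds the constant, each must be evaluated (which requires the signed leading term of $E_{\alpha,\beta}$ in the sector, not merely the bound $\lesssim r^{-\sigma}$, since an upper bound can only yield $O(|\xi|^{\sigma-n})$ rather than $\sim$), and the sum must be shown nonzero. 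The hypothesis $\sigma>(n-1)/2$ is what makes the oscillatory tail \emph{converge} after integration by parts, not what makes it lower order. Your sketch is sound for $\sigma\geq n$, where the inner main term genuinely dominates.

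For \eqref{eab2} you reduce matters to ``showing that the integral tends to a non-zero finite limit,'' but that is precisely the step that fails at leading order: the naive limit is $E_{\alpha,\beta}(0)$ times the regularised value of $\int_0^\infty J_{n/2-1}(s)\,s^{n/2}\,ds$, and that regularised value is $0$ (it is, up to constants, the Fourier transform of the constant function $1$ evaluated away from the origin). A concrete instance is $n=1$, $\alpha=\beta=\sigma=1$, $\varphi=\pi$: then $E_{1,1}(-|x|)=e^{-|x|}$ and $\mathcal{F}(e^{-|\cdot|})(\xi)=2/(1+4\pi^2|\xi|^2)\sim|\xi|^{-2}=o(|\xi|^{-1})$, so the limiting constant can indeed vanish and the true decay come from the next term, which carries the extra factor $|\xi|^{-\sigma}$ produced by $E_{\alpha,\beta}(e^{\dot{\imath}\varphi}(s/|\xi|)^\sigma)-E_{\alpha,\beta}(0)$. (The paper's own proof of \eqref{eab2} also stops at computing the two limits in Lemmas \ref{lmsmpt1} and \ref{lmsmpt2} without verifying that their sum is nonzero, so your proposal inherits rather than introduces this difficulty; but a complete argument must confront it.) A last small point: the $\alpha,\beta$-independence advertised in the abstract refers to the rates, not to the limiting constants, which involve $1/\Gamma(\beta)$ and $1/\Gamma(\beta-\alpha)$.
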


Together, the asymptotic formulas (\ref{eab1})
and (\ref{eab2}) imply the estimates
\begin{equation*}
{\left|\mathcal{F}\left(E_{\alpha,\beta}(e^{\dot{\imath} \varphi } |\cdot|^{\sigma})
\right) (\xi)\right|\lesssim}
\begin{cases}
|\xi|^{\sigma-n}(1+|\xi|^{\sigma})^{-1},&
(n-1)/2<\sigma<n,\medskip\\
\left(1+{|\xi|^{n}}\log{|\xi|}\right)^{-1}
\log{|\xi|},& \sigma=n,\medskip\\
\left(1+{|\xi|^{n}}\right)^{-1},& \sigma>n,
\end{cases}
\end{equation*}
and we arrive at the following $L^{p}(\mathbb{R}^{n})$ boundedness result for the Fourier transform of $E_{\alpha,\beta}(e^{\dot{\imath} \varphi } |\cdot|^{\sigma})$:
\begin{theorem}
Suppose that
$\alpha$, $\beta$ and
$\varphi$ are as in Theorem \ref{thmsmt}.
Then the Fourier transform of
 $E_{\alpha,\beta}(e^{\dot{\imath}
\varphi } |\cdot|^{\sigma})$ is in $L^{p}(\mathbb{R}^{n})$, for all
\begin{equation*}
 \left\{
   \begin{array}{ll}
 1<p<{n}/{(n-\sigma)}, & \hbox{$(n-1)/2<\sigma<n$;}
\medskip \\
    1<p<\infty, & \hbox{$\sigma=n$;} \medskip\\
        1<p\leq\infty, & \hbox{$\sigma>n$.}
   \end{array}
 \right.
\end{equation*}
\end{theorem}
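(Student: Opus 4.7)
The plan is to deduce the result directly from the pointwise envelope displayed just before the statement, which fuses the two asymptotic formulas of Theorem \ref{thmsmt} into a single bound valid on all of $\mathbb{R}^{n}\setminus\{0\}$. Since Theorem \ref{thmsmt} (as noted there) also guarantees continuity of $\mathcal{F}\!\left(E_{\alpha,\beta}(e^{\dot{\imath}\varphi}|\cdot|^{\sigma})\right)$ on $\mathbb{R}^{n}\setminus\{0\}$, the membership in $L^{p}(\mathbb{R}^{n})$ reduces to checking local integrability of the envelope near the origin and its integrability at infinity, both of which I would carry out in polar coordinates by splitting $\mathbb{R}^{n}=\{|\xi|<1\}\cup\{|\xi|\geq 1\}$.

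For the tail, the envelope decays like $|\xi|^{-n}$ in all three regimes, so integrability on $\{|\xi|\geq 1\}$ reduces to convergence of $\int_{1}^{\infty} r^{-np+n-1}\,dr$, which holds precisely when $p>1$. This accounts for the uniform lower endpoint $p>1$ that appears in every case of the theorem. For the local part I would distinguish the three regimes dictated by Theorem \ref{thmsmt}: when $\sigma>n$ the envelope is bounded near the origin, hence locally in $L^{p}$ for every $p\in[1,\infty]$, giving $1<p\leq\infty$; when $\sigma=n$ the envelope is controlled by $|\log|\xi||$, whose $p$-th power is locally integrable for every finite $p$ (because $\int_{0}^{1}|\log r|^{p}r^{n-1}\,dr<\infty$) but is unbounded near $0$, yielding $1<p<\infty$; and when $(n-1)/2<\sigma<n$ the envelope behaves like $|\xi|^{\sigma-n}$, so local integrability reduces to $\int_{0}^{1} r^{(\sigma-n)p+n-1}\,dr<\infty$, equivalent to $(n-\sigma)p<n$, i.e.\ $p<n/(n-\sigma)$.

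The main, and essentially only, obstacle is bookkeeping: all analytic content sits in Theorem \ref{thmsmt}, and once its two asymptotic equivalences have been turned into a single pointwise upper bound the remaining task is a routine computation of three Beta-type integrals. The one genuinely delicate point to be careful about is the transition from the asymptotic equivalence ``$\sim$'' in Theorem \ref{thmsmt} to an upper estimate on the whole punctured space; this is handled by observing that the Fourier transform is continuous hence bounded on any compact annulus $\{r\leq|\xi|\leq R\}$ separating $0$ from $\infty$, so the envelope displayed before the theorem dominates the actual function globally, up to a multiplicative constant absorbed into the $\lesssim$.
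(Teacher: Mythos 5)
Your proposal is correct and follows essentially the same route as the paper, which likewise passes from the two asymptotic equivalences of Theorem \ref{thmsmt} (plus continuity on $\mathbb{R}^{n}\setminus\{0\}$) to the displayed pointwise envelope and then reads off $L^{p}$ membership from the three radial integrals. Your explicit treatment of the compact-annulus step and of the endpoint computations simply fills in details the paper leaves implicit.
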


The rest of this paper is dedicated to the proof of
Theorem \ref{thmsmt}. Our starting point is the identity
\begin{equation*}
\mathcal{F}\left(E_{\alpha,\beta}(e^{\dot{\imath} \varphi } |\cdot|^{\sigma})
\right)(\xi)
=(2\pi)^{\frac{n}{2}}|\xi|^{1-\frac{n}{2}}
\int_{0}^{\infty}
E_{\alpha,\beta}(e^{\dot{\imath} \varphi } r^{\sigma})
J_{\frac{n}{2}-1}( r |\xi|)
r^{\frac{n}{2}} dr,
\end{equation*}
where $J_{{n}/{2}-1}$ denotes Bessel function of the first kind.
This identity follows from applying the well-known formula
(\ref{fjdl1})
in Section \ref{sec2} to the radially symmetric tempered distribution
$E_{\alpha,\beta}(e^{\dot{\imath} \varphi} |\cdot|^{\sigma})$
defined on $\mathbb{R}^{n}$.
The proof of the asymptotic formulas (\ref{eab1})
and (\ref{eab2}) is given in Section \ref{prcs}.
In a nutshell, they follow from the identity
(\ref{frtj}), Lemma \ref{lmsmpt1} and Lemma \ref{lmsmpt2} combined. The idea of the proof of these lemmas is discussed at the beginning of Section \ref{prcs}.
We  present some numerical tests of
the asymptotic formula (\ref{eab1}) of Theorem \ref{thmsmt}
in Appendix A at the end of the paper.
In the upcoming Section 2,
we collect some useful properties of the Mittag-Leffler function
and Bessel function of the first kind,
and we prove some preliminary lemmas.
\section{Preliminaries and auxiliary results}
\subsection{ \textbf{A suitable integral representative and
some useful estimates}}\label{sec1}
Fix $\alpha\in (0,2)$ and $\beta\in \mathbb{C}$
and let $\epsilon>0$. Let
$\pi \alpha/2<\omega< \pi \alpha$,
if $0<\alpha<1$, and $\pi \alpha/2<\omega< \pi$,
if $1\leq \alpha<2$, i.e., let
\begin{equation*}
\pi \alpha/2<\omega< \min\{\pi \alpha,\pi\}.
\end{equation*}
Consider the path $H_{\epsilon, \omega}$ in the complex plane that consists of the two rays
$\left\{\arg{z}=-\omega, |z|\geq \epsilon\right\}$ and
$\left\{\arg{z}=\omega, |z|\geq \epsilon\right\}$,
and the circular arc
\begin{equation*}
\left\{-\omega\leq\arg{z}\leq \omega, |z|=\epsilon\right\},
\end{equation*}
joined and taken in the direction demonstrated in Figure \ref{figcntr0} below (cf. \cite{gorenflo2002computation}, Figure 1 and  \cite{podlubny1998fractional}, Figure 1.4).
The Mittag-Leffler function $E_{\alpha,\beta}$ has the following contour integral representation:
\begin{equation}\label{cntr0}
E_{\alpha,\beta}(z)=
\frac{1}{2\pi \dot{\imath} \alpha}\int_{H_{\epsilon,\omega}}
\frac{e^{\varrho^{1/\alpha}}{
\varrho^{(1-\beta)/\alpha}}}{
\varrho-z}d\varrho,
\end{equation}
for all $z\in \mathbb{C}$ such that
$|z|<\epsilon$ or $|\arg{z}|>\omega$.
\begin{figure}[H]
\centering
\begin{tikzpicture}[scale=0.45]
\fill [gray!10] (0,0) circle (2.5);
\fill [gray!10] (0.434,2.462)--(1.22,6.89)--
(-7,7)--(-7,-7)--(1.22,-6.89)
--(0.434,-2.462)--cycle;
\draw [->](-7,0) --(7,0);
\draw [->](0,-7) --(0,7);
\draw [dashed](0,0)--(0.434,2.462);
\draw [thick](0.434,2.462)--(1.22,6.89);
\draw [-{Latex[length=2mm]}](0.543,3.08) --(0.76,4.31);
\draw [dashed](0,0)--(0.434,-2.462);
\draw [thick](0.434,-2.462)--(1.22,-6.89);
\draw [-{Latex[length=2mm]}](0.76,-4.31)--(0.543,-3.08);
\draw [thick] (2.5,0) arc [radius=2.5, start angle=0, end angle= 80];
\draw [thick] (2.5,0) arc [radius=2.5, start angle=0, end angle= -80];
\draw  [-{Latex[length=2mm]}] (2.35,-0.853) arc [radius=2.5, start angle=-27, end angle= -25];
\draw (0.45,0) arc [radius=0.55, start angle=0, end angle= 70];
\node [above right] at (2.28,-0.18) {\small $\epsilon$};
\node [above] at (0.75,-0.161) {\small $\omega$};
\node [right] at (1.9,-1.7678) {$H_{\epsilon,\omega}$};
\end{tikzpicture}
\caption{\small The contour $H_{\epsilon, \omega}$ ($\alpha \in (0,1/2)$). The integral (\ref{cntr0}) represents $E_{\alpha,\beta}$ in the shaded region.}\label{figcntr0}
\end{figure}
The representation (\ref{cntr0}) is derived originally in \cite{dzhrbashyan1954integral}.
See also \cite{gorenflo2020mittag}, Section 4.7,
\cite{gorenflo2002computation}, and
\cite{podlubny1998fractional}, Theorem 1.1, for more details. In particular, given
$-\pi<\varphi\leq \pi$ such that
$\,|\varphi|> \pi\alpha/2$, and $r\geq 0$, one may set
\begin{equation*}
\pi\alpha/2<\omega<\min
\left\{|\varphi|,\pi \alpha \right\},
\end{equation*}
and use (\ref{cntr0}) to write
\begin{equation}\label{esln1}
E_{\alpha,\beta}(r e^{\dot{\imath} \varphi })=
\frac{1}{2\pi \dot{\imath} \alpha}\int_{H_{\omega}}
\frac{e^{z^{1/\alpha}}{
z^{(1-\beta)/\alpha}}}{
z-r e^{\dot{\imath} \varphi }}dz,
\end{equation}
for all $\alpha \in (0,2)$ and $\beta\in \mathbb{R}$, where
$H_{\omega}$ denotes for short the contour $H_{1,\omega}$.
The contour integral representative (\ref{esln1}) is used in \cite{abdelhakim2025mittag} to show that $E_{\alpha,\beta}( e^{\dot{\imath} \varphi } |x|^{\sigma})$ is an eigenfunction of the Hilbert transform on $L^{p}(\mathbb{R})$, for any $\sigma>0$,
provided that $|\varphi|>\alpha\pi/2$.
\par\medskip
It is well-known (see e.g. \cite{kilbas2006theory}, Section 1.8, \cite{podlubny1998fractional}, Theorem 1.6, and \cite{abdelhakim2023asymptotic}) that there exists a positive constant $C$ such that
\begin{equation}\label{ptws0}
|E_{\alpha,\beta}(z)|\leq
C(1+|z|)^{-1},\quad
\alpha\pi/2< |\arg{z}|\leq \pi,
\end{equation}
for all $\alpha \in (0,2)$ and all $\beta\in \mathbb{R}$.\bigskip\par\medskip
The following elementary lemma will be useful:
\begin{lemma}\label{infrt}
Assume that
\begin{equation*}
0<\alpha<2,\;\;-\pi<\varphi\leq \pi,\;\;|\varphi|> \pi\alpha/2,\;\;\;
\pi\alpha/2<\omega<\min
\left\{|\varphi|,\pi \alpha \right\}.
\end{equation*}
Let $H_{\omega}$ be the contour that consists of the two rays
\begin{equation*}
\left\{\arg{z}=-\omega, |z|\geq 1\right\}\quad \text{and}\quad
\left\{\arg{z}=\omega, |z|\geq 1\right\},
\end{equation*}
and the circular arc $\left\{-\omega\leq\arg{z}\leq \omega, |z|=1\right\}$, traced in the direction shown in
Figure \ref{figcntr0} above with $\epsilon=1$.
Let $\varphi_{0}$ be the constant given by
\begin{equation*}
8\varphi_{0}=\begin{cases}
\min{\{|\sin{(\varphi-\omega)}|,|\sin{(\varphi+\omega)}|\}},&
|\varphi- \omega|\neq \pi,\;|\varphi+ \omega|\neq \pi;\vspace{0.15 cm}\\
|\sin{(\varphi-\omega)}|,&
|\varphi- \omega|\neq \pi,\;|\varphi+ \omega|= \pi;\vspace{0.15 cm}\\
|\sin{(\varphi+\omega)}|,&
|\varphi- \omega|= \pi,\;|\varphi+ \omega|\neq \pi;\vspace{0.15 cm}\\
1,&
|\varphi- \omega|=\pi,\;|\varphi+ \omega|=\pi.
\end{cases}
\end{equation*}
Then, we have
\begin{equation}\label{zrst1}
\min_{z\in H_{\omega}}{|z-r e^{\dot{\imath} \varphi }|}
\geq \varphi_{0}\max{\{r,4\}}.
\end{equation}
In particular, we have
\begin{eqnarray}
\label{zrst11}\min_{\rho\geq 1}{|\rho^{\alpha}-r e^{\dot{\imath} (\varphi\pm \omega )}|}&\geq& \varphi_{0}\max{\{r,4\}},\\
\label{zrst12}\min_{|\theta|\leq \omega}{|1-r e^{\dot{\imath} (\varphi-\theta )}|}&\geq& \varphi_{0}\max{\{r,4\}}.
\end{eqnarray}
\end{lemma}
\begin{proof}
Fix $r\geq 0$. First let $\alpha\pi/2<\varphi\leq \pi$. Then
\begin{equation*}
0<\varphi-\omega<(1-\alpha/2)\pi,\quad
\alpha\pi<\varphi+\omega<2\pi.
\end{equation*}
Since $\rho\mapsto r^2+\rho^2-2 r \rho \cos{(\varphi \mp \omega)}$ is strictly increasing on $[1,\infty)$ when $ r\cos{(\varphi\mp \omega)}\leq 1$, and has a unique global minimum at
$\rho=r\cos{(\varphi\mp \omega)}$ if $r\cos{(\varphi\mp \omega)}>1$, we have
\begin{multline}\label{inf1}
\min{\{|r e^{\dot{\imath} \varphi }-\rho e^{\pm\dot{\imath} \omega }|^{2}:\rho\geq 1\}}\\
\qquad\quad\;=\min{\{r^2+\rho^2-2 r \rho \cos{(\varphi \mp \omega)}:\rho\geq 1\}}\vspace{0.2 cm}\\
=
\begin{cases}
1+r^{2}-2 r \cos{(\varphi\mp \omega)},&r\cos{(\varphi\mp \omega)}\leq 1;\vspace{0.15 cm}\\
r^{2}\sin^{2}{(\varphi\mp \omega)},&r\cos{(\varphi\mp \omega)}>1.
\end{cases}
\end{multline}
Notice also that, for any real number $t$,
\begin{equation}\label{ntt}
1+r^2-2r\cos{t}\geq
\begin{cases}
1+r^{2},&\cos{t}\leq 0;\vspace{0.15 cm}\\
1+r^{2}/2,&\cos{t}>0,\;r\geq 4\cos{t};\vspace{0.15 cm}\\
\sin^{2}{t},&\cos{t}>0,\;r<4\cos{t}.
\end{cases}
\end{equation}
Using this inequality, it follows from (\ref{inf1}) that
\begin{multline}\label{inf12}
\min{\{|r e^{\dot{\imath} \varphi }-\rho e^{\pm\dot{\imath} \omega }|^{2}:\rho\geq 1\}}\vspace{0.2 cm}\\
\geq
\begin{cases}
1+r^{2},&\cos{(\varphi\mp \omega)}\leq 0;\vspace{0.15 cm}\\
1+r^{2}/2,&\cos{(\varphi\mp \omega)}>0,\;r\geq 4;\vspace{0.15 cm}\\
\sin^{2}{(\varphi\mp \omega)},&\cos{(\varphi\mp \omega)}> 0,\;r\leq \min{\{4,\sec{(\varphi\mp \omega)}\}};\vspace{0.15 cm}\\
r^{2}\sin^{2}{(\varphi\mp \omega)},&\cos{(\varphi\mp \omega)}> 0,\;\sec{(\varphi\mp \omega)}< r<4.
\end{cases}
\end{multline}
Moreover, since $\theta\mapsto 1+r^2-2 r \cos{(\varphi - \theta)}$ is strictly decreasing on $[-\omega,\omega]$ if $\varphi+\omega\leq \pi$, and has one stationary point (a maximum) at $\theta=\varphi-\pi$ if $\varphi+\omega>\pi$, we have
\begin{multline}\label{inf2}
\min{\{|r e^{\dot{\imath} \varphi }- e^{\dot{\imath} \theta }|^{2}: \theta \leq |\omega|\}}
=\min{\{1+r^2-2 r \cos{(\varphi - \theta)}:\theta \leq |\omega|\}}\\
=
\begin{cases}
1+r^{2}-2 r \cos{(\varphi-\omega)},&\varphi+\omega\leq \pi;\vspace{0.15 cm}\\
1+r^{2}-2 r \max{\{\cos{(\varphi- \omega)},\cos{(\varphi+ \omega)}\}},&\varphi+\omega>\pi.
\end{cases}
\end{multline}
Again, employing (\ref{ntt}), we deduce from (\ref{inf2}) that
\begin{multline}\label{inf22}
\min{\{|r e^{\dot{\imath} \varphi }- e^{\dot{\imath} \theta }|^{2}: \theta \leq |\omega|\}}\\
\geq
\begin{cases}
1+r^{2}/2,&r\geq 4;\vspace{0.15 cm}\\
\sin^{2}{(\varphi-\omega)},&\varphi+\omega\leq \pi,\; r< 4;\vspace{0.15 cm}\\
\min{\{\sin^{2}{(\varphi- \omega)},\sin^{2}{(\varphi+ \omega)}\}},&\varphi+\omega>\pi,\;r< 4.
\end{cases}
\end{multline}
If $-\pi<\varphi<-\alpha\pi/2$, one can obtain the minima in
(\ref{inf1}) and (\ref{inf2}), analogously, with $\varphi\mp \omega$ replaced by $\varphi\pm \omega$. The estimate (\ref{zrst1}) follows therefore from (\ref{inf12}), (\ref{inf22}), and their analogues for $\varphi\in(-\pi,-\alpha\pi/2)$.\qed
\end{proof}
\begin{remark}\!\!\!\!.
The estimate (\ref{zrst1}) and its consequences (\ref{zrst11}) and (\ref{zrst12}) will be used frequently in the proof of Theorem \ref{thmsmt} in the upcoming section.
It can be seen as a refined version of
the widely used (see e.g. \cite{podlubny1998fractional}, proof of Theorem 1.4, \cite{gorenflo2020mittag}, proof of Proposition 3.6) estimate that reads: if
\begin{equation*}
\alpha\pi/2<\omega<\mu<\min{\{\pi,\alpha\pi\}},
\end{equation*}
and $\varrho\in \mathbb{C}$ is such that
$|\varrho|$ is large and $\mu\leq |\arg{\varrho}|\leq \pi$,
then
\begin{equation*}
\min_{z\in H_{\omega}}{|\varrho-z|}
\geq |\varrho|\sin{(\mu-\omega)}.
\end{equation*}
The advantage is that (\ref{zrst1})  takes into account
the case $|\rho|$ is small.
\end{remark}
\subsection{\textbf{Bessel functions of the first kind: basics and an extended asymptotic expansion}}
\label{sec2}
The Fourier transform of a radially symmetric function $f:\mathbb{R}^{n}\rightarrow \mathbb{C}$
such that $f(x)=f_{0}(|x|)$
is the radially symmetric function given by
\begin{equation}\label{fjdl1}
\mathcal{F}{f}(\xi)=
(2\pi)^{\frac{n}{2}}|\xi|^{1-\frac{n}{2}}
\int_{0}^{\infty}
f_{0}(r)J_{\frac{n}{2}-1}( |\xi| r)
r^{\frac{n}{2}}dr,
\end{equation}
where, for $\operatorname{Re}\lambda>-{1}/{2}$, $J_{\lambda}$ is the Bessel function defined by
\begin{equation*}
J_{\lambda}(r):=
\tfrac{2^{-\lambda}}{\Gamma{(\frac{1}{2})}
\Gamma{(\lambda+\frac{1}{2})}}
r^{\lambda}
\int_{-1}^{1}e^{\dot{\imath}  r s}
(1-s^{2})^{\lambda-\frac{1}{2}}ds,\quad r\geq 0.
\end{equation*}
When $\lambda=-1/2$ we have the identity
\begin{equation}\label{jd1}
J_{-\frac{1}{2}}(r)=
\sqrt{\tfrac{2}{\pi }}{r^{-\frac{1}{2}}}{\cos{r}},\quad r> 0.
\end{equation}
If $\operatorname{Re}\lambda>-{1}/{2}$,
the asymptotic behaviour of the Bessel function $J_{\lambda}$ near zero and near infinity can be summarized as follows: \\
\indent For a small argument, we have
\begin{equation*}
J_{\lambda}(r)=\frac{1}{2^{\lambda}
\Gamma{(\lambda+1)}}r^{\lambda}+O(r^{1+\operatorname{Re}
\lambda}).
\end{equation*}
When $\operatorname{Re}\lambda>-{1}/{2}$, this comes from the identity
\begin{equation*}
J_{\lambda}(r)=\frac{1}{2^{\lambda}
\Gamma{(1+\lambda)}}r^{\lambda}+\widetilde{J}_{\lambda}(r),
\end{equation*}
where
\begin{equation*}
\widetilde{J}_{\lambda}(r):=
\frac{1}{2^{\lambda}
\Gamma{(\lambda+\frac{1}{2})}\Gamma{(\frac{1}{2})}}r^{\lambda}
\int_{-1}^{1}(e^{\dot{\imath}rt}-1)
(1-t^{2})^{\lambda-\frac{1}{2}}dt,
\end{equation*}
and $\widetilde{J}_{\lambda}$ satisfies the inequality
\begin{equation*}
|\widetilde{J}_{\lambda}(r)|\leq
\frac{1}{
2^{\operatorname{Re}\lambda}(1+\operatorname{Re}\lambda)
|\Gamma{(\lambda+\frac{1}{2})}|\Gamma{(\frac{1}{2})}}\,
r^{1+\operatorname{Re}\lambda}.
\end{equation*}
In particular, for $n>1$, we have
\begin{equation}\label{jdsmll1}
J_{\frac{n}{2}-1}(r)=
a_{n}r^{\frac{n}{2}-1}
+O(r^{\frac{n}{2}}),
\end{equation}
where $a_{n}:={2^{1-\frac{n}{2}}}/{\Gamma{(\frac{n}{2})}}$.\\
\indent For a large argument, we have the asymptotic expansion (see \cite{cho2015inhomogeneous}, Lemma 3.5):
\begin{equation}\label{jdsmpt}
J_{\lambda}(r)=
\sqrt{\tfrac{2}{\pi }}\cos{(r-\lambda_{*})} r^{-\frac{1}{2}}-
\tfrac{(\lambda-\frac{1}{2})
\Gamma{(\lambda+\frac{3}{2})}}{
\sqrt{2\pi}\Gamma{(\lambda+\frac{1}{2})}}\sin{(r-\lambda_{*})}
r^{-\frac{3}{2}}
+h_{\lambda}(r),
\end{equation}
$r>1$, with $\lambda_{*}=\frac{\pi}{2}\lambda+\frac{\pi}{4}$, where the remainder $h_{\lambda}$ and its first derivative $h^{\prime}_{\lambda}$ satisfy the bounds
\begin{equation*}
|h_{\lambda}(r)|\lesssim r^{-\frac{5}{2}},\quad |h^{\prime}_{\lambda}(r)|\lesssim (1+r^{-1})r^{-\frac{5}{2}}.
\end{equation*}
For more on the basic properties of Bessel functions of the first kind, see e.g. \cite{grafakos2008classical}, Appendix B,
\cite{stein1993harmonic}, Chapter VIII, and the classic monograph \cite{watson1922treatise}. \par\medskip  For our purposes, we need the following extension of the asymptotic formula (\ref{jdsmpt}):
\begin{lemma}\label{bsllma}
Let $\operatorname{Re}\lambda>-1/2$. Then, for $r>1$, the Bessel function $J_{\lambda}(r)$
has the asymptotic expansion:
\begin{equation}\label{bssl}
J_{\lambda}(r)=
\sum_{0\leq \ell \leq M}
c_{\ell}(\lambda)\cos{(r+{\pi}\ell/{2}-\lambda_{*})}
r^{-(\ell+\frac{1}{2})}+L_{\lambda}(r;M),
\end{equation}
for each $M\geq 1$, where
\begin{equation*}
c_{\ell}(\lambda):=
\frac{\left(\lambda-\frac{1}{2}\right)_{\!\ell}}{\sqrt{2\pi} 2^{\ell-1}\ell !}
\frac{\Gamma{(\lambda+\ell+\frac{1}{2})}}{
\Gamma{(\lambda+\frac{1}{2})}},
\end{equation*}
and $r\mapsto L_{\lambda}(r;M)$ is a continuous function on $(1,\infty)$ such that
\begin{equation}\label{lmrnt}
\left|L_{\lambda}(r;M)\right|\lesssim
r^{-M-\frac{3}{2}}.
\end{equation}
\end{lemma}

Observe that the first term in the expansion
(\ref{bssl}) (the term that correspond to $\ell=0$) converges to $J_{-{1}/{2}}(r)$ when $\lambda\rightarrow-{1}/{2}$. One can also easily check that the expansion
(\ref{bssl}) reduces to (\ref{jdsmpt})
when $M=1$.
We give a proof of (\ref{bssl}), where we compute
$L_{\lambda}(r;M)$ explicitly.
\begin{proof}
We shall obtain the asymptotic expansion (\ref{bssl}) of the Bessel function $J_{\lambda}(r)$ on $r>1$. For $\operatorname{Re}\lambda>-1/2$, $J_{\lambda}$ has the integral representation (see e.g. \cite{grafakos2008classical,stein1993harmonic}):
\begin{equation}\label{jlmrs}
J_{\lambda}(r)={\nu(\lambda)}\, r^{\lambda}\int_{0}^{\infty}
e^{-rs}\left(\dot{\imath} e^{-\dot{\imath}r}(s^{2} +2\dot{\imath} s)^{\lambda-\frac{1}{2}} -\dot{\imath} e^{\dot{\imath}r} (s^{2} -2\dot{\imath} s)^{\lambda-\frac{1}{2}} \right)ds,
\end{equation}
for $r>1$, with the constant ${\nu(\lambda)}={2^{-\lambda}}/{\left(
\Gamma{(\tfrac{1}{2})}\Gamma{(\lambda+\tfrac{1}{2})}
\right)}$. Following \cite{cho2015inhomogeneous}, we write
\begin{equation}\label{bsslid}
\begin{split}
&\dot{\imath} e^{-\dot{\imath}r}(s^{2} +2\dot{\imath} s)^{\lambda-\frac{1}{2}} -
\dot{\imath} e^{\dot{\imath}r} (s^{2} -2\dot{\imath} s)^{\lambda-\frac{1}{2}} \\
&=(2s)^{\lambda-\frac{1}{2}} \left( e^{\dot{\imath} (r-{\lambda}_{*})}
\left(1+\tfrac{\dot{\imath}s}{2}\right)^{
\lambda-\frac{1}{2}}+e^{-\dot{\imath} (r-{\lambda}_{*})}\left(1-\tfrac{\dot{\imath}s}{2}
\right)^{\lambda-\frac{1}{2}}\right),
\end{split}
\end{equation}
where ${\lambda}_{*}$ denotes
$\tfrac{\pi}{2}\lambda+\tfrac{\pi}{4}$. Taking a finite number of terms of the Taylor series expansions of the factors $\left(1\pm\tfrac{\dot{\imath}s}{2}\right)^{\lambda-\frac{1}{2}}$ we get
\begin{equation}\label{tlr}
\left(1\pm\tfrac{\dot{\imath}s}{2}\right)^{\lambda-\frac{1}{2}}=
\sum_{\ell=0}^{M}
\Lambda^{\pm}_{\ell}(\lambda) s^{\ell}
+\Lambda^{\pm}_{M+1}
\left(1\pm\tfrac{\dot{\imath}
 s_{*}}{2}\right)^{\lambda-M-\frac{3}{2}}s^{M+1},
\end{equation}
for some $s_{*} \in (0,s)$, where
\begin{eqnarray*}
\Lambda^{+}_{\ell}(\lambda)&:=&
\frac{1}{2^{\ell}\ell !}{(\lambda-\tfrac{1}{2})_{\ell}}
\,e^{\dot{\imath}\ell\pi/2},\\
\Lambda^{-}_{\ell}(\lambda)&:=&\frac{
1}{2^{\ell}\ell !}{(\lambda-\tfrac{1}{2})_{\ell}}\,e^{-\dot{\imath}\ell\pi/2},\\
\end{eqnarray*}
with the falling factorial notation
\begin{equation*}
(\lambda-\tfrac{1}{2})_{\ell}:=
(\lambda-\tfrac{1}{2})\cdots (\lambda-\tfrac{1}{2}-\ell+1), \quad \ell\geq 1,
\qquad (\lambda-\tfrac{1}{2})_{0}=1.
\end{equation*}
Substitute for $\left(1\pm\tfrac{\dot{\imath}s}{2}\right)^{
\lambda-\frac{1}{2}}$ from (\ref{tlr})
into (\ref{bsslid}), then plug (\ref{bsslid})
into (\ref{jlmrs}) and use the fact that
\begin{equation*}
\int_{0}^{\infty}
e^{-rs}s^{\ell+\lambda-\frac{1}{2}}ds
={\Gamma{(\ell+\lambda+\tfrac{1}{2})}}{
r^{-(\ell+\lambda+\frac{1}{2})}},
\end{equation*}
to obtain
\begin{multline}\label{jlmrs1}
J_{\lambda}(r)=2^{\lambda-\frac{1}{2}}{\nu(\lambda)}
\sum_{\ell=0}^{M}
{\Gamma{(\ell+\lambda+\tfrac{1}{2})}}\\
\left(\Lambda^{+}_{\ell}(\lambda) e^{\dot{\imath} (r-{\lambda}_{*})}+\Lambda^{-}_{\ell}(\lambda) e^{-\dot{\imath} (r-{\lambda}_{*})} \right)
r^{-(\ell+\frac{1}{2})}
+L_{\lambda}(r;M),
\end{multline}
where
\begin{equation*}
L_{\lambda}(r;M):= 2^{\lambda-\frac{1}{2}}
{\nu(\lambda)} r^{\lambda}
\int_{0}^{\infty}e^{-r s}
\Sigma_{\lambda}(r;M)
 s^{M+\lambda+\frac{1}{2}}ds,
\end{equation*}
with
\begin{multline*}
\Sigma_{\lambda}(r;M):=\;
\Lambda^{+}_{M+1}(\lambda)
e^{\dot{\imath} (r-{\lambda}_{*})}
(1+{\dot{\imath}
 s_{*}}/{2})^{\lambda-M-\frac{3}{2}}\\+
\Lambda^{-}_{M+1}(\lambda)
e^{-\dot{\imath} (r-{\lambda}_{*})}
(1-{\dot{\imath}
 s_{*}}/{2})^{\lambda-M-\frac{3}{2}}.
\end{multline*}
Using the estimate
\begin{equation*}
\left|(1\pm{\dot{\imath}
 s_{*}}/{2})^{\lambda-M-\frac{3}{2}}\right|\lesssim
\left(1+s\right)^{\lambda-M-\frac{3}{2}},
\end{equation*}
we see that
\begin{equation*}
\left|L_{\lambda}(r;M)\right|\lesssim r^{\lambda}
\int_{0}^{\infty}e^{-r s}
\left(1+s\right)^{\lambda-M-\frac{3}{2}}
 s^{M+\lambda+\frac{1}{2}}ds,
\end{equation*}
from which follows the estimate
\begin{equation}\label{jlmrs2}
\left|L_{\lambda}(r;M)\right| \lesssim r^{-(M+\frac{3}{2})}.
\end{equation}
Indeed, if $\lambda\leq M+\frac{3}{2}$ then
${\left(1+{s}\right)^{\lambda-M-\frac{3}{2}}}
\lesssim 1$ and we have
\begin{equation*}
\left|L_{\lambda}(r;M)\right|\lesssim r^{\lambda}
\int_{0}^{\infty}e^{-r s}
 s^{M+\lambda+\frac{1}{2}}ds\approx r^{-(M-\frac{3}{2})}.
\end{equation*}
On the other hand, if $\lambda> M+\frac{3}{2}$ then
${\left(1+{s}\right)^{\lambda-M-\frac{3}{2}}}
\approx 1+s^{\lambda-M-\frac{3}{2}}$. In this case, the estimate (\ref{jlmrs2}) follows from the fact that
\begin{equation*}
r^{\lambda}
\int_{0}^{\infty}e^{-r s} s^{2\lambda-1}ds
\approx r^{-\lambda}=o(r^{-(M+\frac{3}{2})}).
\end{equation*}
Substituting for
$\nu(\lambda)$, $\Lambda^{+}_{\ell}(\lambda)$,
and $\Lambda^{-}_{\ell}(\lambda)$ into the expansion (\ref{jlmrs1}), and then using the simple fact that
\begin{equation*}
e^{\dot{\imath}(r+\frac{\pi}{2}\ell-\lambda_{*})}
+e^{-\dot{\imath}(r+\frac{\pi}{2}\ell-\lambda_{*})}=
2\cos{(r+{\pi}\ell/{2}-\lambda_{*})},
\end{equation*}
we obtain the asymptotic expansion (\ref{bssl}).\qed
\end{proof}
\section{Asymptotics in the Fourier space: A proof of Theorem \ref{thmsmt}}\label{prcs}
Applying formula (\ref{fjdl1}), we obtain the Fourier transform of $E_{\alpha,\beta}(e^{\dot{\imath} \varphi } |\cdot|^{\sigma})$ in the form of a radially symmetric oscillatory integral as follows:
\begin{eqnarray*}
\mathcal{F}\left(E_{\alpha,\beta}(e^{\dot{\imath} \varphi } |\cdot|^{\sigma})
\right)(\xi)
&=&(2\pi)^{\frac{n}{2}}|\xi|^{1-\frac{n}{2}}
\int_{0}^{\infty}
E_{\alpha,\beta}(e^{\dot{\imath} \varphi } r^{\sigma})
J_{\frac{n}{2}-1}( r |\xi|)
r^{\frac{n}{2}} dr\\
&=&(2\pi)^{\frac{n}{2}}|\xi|^{-n}
\int_{0}^{\infty}
E_{\alpha,\beta}(e^{\dot{\imath} \varphi } r^{\sigma}|\xi|^{-\sigma})
J_{\frac{n}{2}-1}( r )
r^{\frac{n}{2}} dr.
\end{eqnarray*}
Let $\phi$ be a smooth positive cut-off function supported in $[-2,2]$
such that $\phi(r)=1$ on $[-1,1]$ and let  $\psi:=1-\phi$. The asymptotic behaviour of the Bessel function
suggests splitting
\begin{equation}\label{frtj}
\mathcal{F}\left(E_{\alpha,\beta}(e^{\dot{\imath} \varphi } |\cdot|^{\sigma})
\right)(\xi)=(2\pi)^{n/2}|\xi|^{-n}
\left({\mathcal{M}_{\alpha,\beta,\varphi,\sigma}}(\xi)+
{\mathcal{N}_{\alpha,\beta,\varphi,\sigma}}(\xi)
\right),
\end{equation}
where
\begin{eqnarray*}
{\mathcal{M}_{\alpha,\beta,\varphi,\sigma}}(\xi)&:=&
\int_{0}^{\infty}
\phi(r)E_{\alpha,\beta}(e^{\dot{\imath} \varphi } r^{\sigma}|\xi|^{-\sigma})
\overline{J}_{n}(r)
dr, \\
{\mathcal{N}_{\alpha,\beta,\varphi,\sigma}}(\xi)&:=&
\int_{0}^{\infty}
\psi(r)
E_{\alpha,\beta}(e^{\dot{\imath} \varphi } r^{\sigma}|\xi|^{-\sigma})\overline{J}_{n}(r)dr,
\end{eqnarray*}
with
\begin{equation*}
\overline{J}_{n}(r):=J_{\frac{n}{2}-1}(r )
r^{\frac{n}{2}}.
\end{equation*}
\par\medskip
We examine the asymptotic behaviour of ${\mathcal{M}_{\alpha,\beta,\varphi,\sigma}}(\xi)$
and ${\mathcal{N}_{\alpha,\beta,\varphi,\sigma}}(\xi)$
both as $\xi\rightarrow 0$ and as $|\xi|\rightarrow +\infty$.
This is the purpose of Lemmas \ref{lmsmpt1} through \ref{lmsmpt2} below. In view of (\ref{frtj}), we see how Lemmas \ref{lmsmpt1} and \ref{lmsmpt2} together give us an insight of the asymptotic behaviour of $\mathcal{F}\left(E_{\alpha,\beta}(e^{\dot{\imath} \varphi } |\cdot|^{\sigma})\right)$, when $\sigma>(n-1)/2$.
Dominated convergence is the main tool used to estimate the remainder term in the asymptotic formulas (\ref{lzl})-(\ref{ltsmptnf}) of Lemma \ref{lmsmpt1}, and
the asymptotic formulas (\ref{htsmptz}) and (\ref{htsmptfty1}) of Lemma \ref{lmsmpt2}. When we speak of the \textit{remainder term} we are clearly assuming that the constant coefficient of the first term in the asymptotic formula is not equal to zero. This is not the case for all values of the parameters (see Remark \ref{rmcs1}
below Lemma \ref{lmsmpt1} and Remark \ref{rmcs2} below Lemma \ref{lmsmpt2}). If the constant coefficient of the first term is not zero, the asymptotic formula is in fact an asymptotic expansion, and the first term is the main term of the expansion. The integral representation (\ref{esln1}) of $E_{\alpha,\beta}$
is used to write ${\mathcal{M}_{\alpha,\beta,\varphi,\sigma}}$
and ${\mathcal{N}_{\alpha,\beta,\varphi,\sigma}}$ as double integrals that are easier to manipulate
in order to eventually justify passing the limit inside the integrals. Our asymptotic analysis
of ${\mathcal{M}_{\alpha,\beta,\varphi,\sigma}}$ exploits the fact that the argument of
$J_{{n}/{2}-1}$ is small enough, on the support of $\phi$, to employ formula (\ref{jdsmll1}), when $n>1$. If $n=1$, we use formula (\ref{jd1}).  \\
\indent The asymptotic analysis of ${\mathcal{N}_{\alpha,\beta,\varphi,\sigma}}$ is expectedly trickier. Bessel functions of large arguments oscillate rapidly and decay too slowly to apply dominated convergence directly here. Exploiting the asymptotic expansion (\ref{bssl}), one can write ${\mathcal{N}_{\alpha,\beta,\varphi,\sigma}}$ as a finite sum of oscillatory integrals. Roughly speaking, one can apply the dominated convergence theorem to each of these oscillatory integrals after integrating the Mittag-Leffler function by parts, sufficiently many times, against the oscillatory factor. This process is carried out in Lemma \ref{lmprts}.
Thanks to formula (\ref{esln1}),
the derivatives of $x\mapsto E_{\alpha,\beta}(e^{\dot{\imath} \varphi } |x|^{\sigma})$, $x\neq 0$, are obtained as contour integrals. We emphasize that the statements of the upcoming Lemmas \ref{lmsmpt1} through \ref{lmsmpt2} hold for all $\alpha \in (0,2)$ and $\beta>0$, provided that $|\varphi|>\alpha \pi/2$.
\begin{lemma}\label{lmsmpt1}
Let $\alpha\in (0,2)$,
$\beta>0$, and $\pi\alpha/2<|\varphi|\leq \pi$. Then
\begin{empheq}[left={
\mathcal{M}_{\alpha,\beta,\varphi,\sigma}(\xi)=\empheqlbrace}]
{alignat=2}
\label{lzl}&A_{\alpha,\beta,\varphi,\sigma}|\xi|^{\sigma}
+o(|\xi|^{\sigma}),&\quad \sigma<n;\vspace{0.5 cm}\\
\label{lzeq}
&B_{\alpha,\beta,\varphi,n}{|\xi|^{n}\log{|\xi|}}+O(|\xi|^{n}),&\quad \sigma=n;\vspace{0.5 cm}\\
\label{lzgeq}
&C_{\alpha,\beta,\varphi,\sigma}|\xi|^{n}+
o(|\xi|^{n}),&\quad \sigma>n,
\end{empheq}
as $\xi \rightarrow 0$, where
\begin{eqnarray*}
A_{\alpha,\beta,\varphi,\sigma}&:=&
-\frac{e^{-\dot{\imath} \varphi}
}{\Gamma{(\beta-\alpha)}}\int_{0}^{\infty}
r^{-\sigma}\phi(r)\overline{J}_{n}(r)dr,  \\
B_{\alpha,\beta,\varphi,n}&:=&
\frac{2^{1-\frac{n}{2}}}{\Gamma{(\frac{n}{2})}} \frac{ e^{-\dot{\imath}\varphi}}{\Gamma(\beta-\alpha)} \\
C_{\alpha,\beta,\varphi,\sigma}&:=& \frac{2^{1-\frac{n}{2}}}{\Gamma{(\frac{n}{2})}}\int_{0}^{\infty}
r^{n-1}E_{\alpha,\beta}(e^{\dot{\imath} \varphi } r^{\sigma})dr.
\end{eqnarray*}
The integrals that define the constants
$A_{\alpha,\beta,\varphi,\sigma}$ and
$C_{\alpha,\beta,\varphi,\sigma}$ are absolutely convergent when $\sigma<n$ and $\sigma>n$, respectively. Moreover, we have
\begin{equation}\label{ltsmptnf}
{\mathcal{M}_{
\alpha,\beta,\varphi,\sigma}}(\xi)= \frac{1}{\Gamma{(\beta)}}\int_{0}^{\infty}
\phi(r)\overline{J}_{n}(r)dr+
O(|\xi|^{-\sigma}),
\end{equation}
as $|\xi|\rightarrow+\infty$.
\end{lemma}
\begin{remark}\label{rmcs1}
Observe that, due to the factor $1/\Gamma{(\beta-\alpha)}$, the constant coefficients
$A_{\alpha,\beta,\varphi,\sigma}$ and $B_{\alpha,\beta,\varphi,\sigma}$ are equal to zero,
when $\beta-\alpha$ is a nonpositive integer. In that case, the asymptotic formulas
(\ref{lzl})  and (\ref{lzeq}) should be understood in the following sense:
\begin{equation*}
{\mathcal{M}_{\alpha,\beta,\varphi,\sigma}(\xi)}=
\begin{cases}
o(|\xi|^{\sigma}),& \sigma<n;\vspace{0.25 cm}\\
O(|\xi|^{n}),& \sigma=n,
\end{cases}
\end{equation*}
as $\xi \rightarrow 0$. This occurs if
$\alpha=\beta=1$  for instance, in which case $E_{\alpha,\beta}$
coincides with the exponential function.

\end{remark}
\begin{proof}
Let us begin with the proof of (\ref{ltsmptnf}).
We start with the well-known recurrence relation (see e.g.
\cite{gorenflo2020mittag},  Section 4.2, \cite{diethelm2002analysis}, Theorem 4.2, and \cite{gupta2007some}, formula (5))
\begin{equation*}
E_{\alpha,\beta}(z)=\frac{1}{\Gamma{(\beta)}}+z
E_{\alpha,\alpha+\beta}(z),\quad z\in \mathbb{C},
\end{equation*}
which is true for all $\alpha,\beta>0$. In particular, we have
\begin{equation}\label{q1mft}
E_{\alpha,\beta}(e^{\dot{\imath} \varphi } r^{\sigma}|\xi|^{-\sigma})=
\frac{1}{\Gamma{(\beta)}}+\frac{1}{|\xi|^{\sigma}}
e^{\dot{\imath} \varphi } r^{\sigma}
E_{\alpha,\alpha+\beta}(e^{\dot{\imath} \varphi } r^{\sigma}|\xi|^{-\sigma}),
\end{equation}
for all $r\geq 0$ and all $\xi\neq 0$. Since $|\varphi|>\alpha \pi/2$, a direct application of the inequality (\ref{ptws0}) implies that
\begin{equation}\label{ptws}
|E_{\alpha,\alpha+\beta}(e^{\dot{\imath} \varphi } r^{\sigma}|\xi|^{-\sigma})|\leq
C(1+r^{\sigma}|\xi|^{-\sigma})^{-1}
\leq C,
\end{equation}
for all $r>0$, and all $\xi\neq 0$.
Moreover, it follows from
formula (\ref{jdsmll1}) that, for any $n>1$, we have
\begin{equation}\label{jnr1}
\overline{J}_{n}(r) =\left(a_{n} +O(r) \right)
r^{n-1},
\end{equation}
which yields the estimate
\begin{equation}\label{zrst2}
\left|\overline{J}_{n}(r)\right|\lesssim
r^{n-1},\qquad r\in \operatorname{supp}\phi \cap [0,\infty).
\end{equation}
The estimate (\ref{zrst2}) holds true when $n=1$ as well, by
the identity (\ref{jd1}).
Combining the inequalities (\ref{ptws}) and (\ref{zrst2}), we see that
\begin{equation*}
r^{\sigma}\phi(r) | E_{\alpha,\alpha+\beta}(e^{\dot{\imath} \varphi } r^{\sigma}|\xi|^{-\sigma})
\overline{J}_{n}(r)|\lesssim
\phi(r)r^{n+\sigma-1},\quad r\geq 0,
\end{equation*}
uniformly in $\xi$. Since
$r\mapsto \phi(r)r^{n+\sigma-1}$ is an $L^{1}([0,\infty))$ function, one can apply the dominated convergence theorem to obtain
\begin{multline*}
\lim_{|\xi|\rightarrow+\infty}\int_{0}^{\infty}
r^{\sigma}\phi(r)E_{\alpha,\alpha+\beta}(e^{\dot{\imath} \varphi } r^{\sigma}|\xi|^{-\sigma})
\overline{J}_{n}(r)dr\\
= E_{\alpha,\alpha+\beta}(0)\int_{0}^{\infty}
r^{\sigma}\phi(r)\overline{J}_{n}(r)dr\\
= \frac{1}{\Gamma{(\alpha+\beta)}}\int_{0}^{\infty}
r^{\sigma}\phi(r)\overline{J}_{n}(r)dr.
\end{multline*}
The asymptotic formula (\ref{ltsmptnf}) follows then from (\ref{q1mft}) and the latter limit. \par\medskip
We turn to the asymptotic behaviour of $\mathcal{M}_{\alpha,\beta,\varphi,\sigma}$ as $\xi\rightarrow 0$.
Suppose that $\sigma<n$, and let $|\xi|<1$. Write
\begin{equation*}
\frac{|\xi|^{-\sigma}}{r^{\sigma} |\xi|^{-\sigma} e^{\dot{\imath} \varphi }-z}=\frac{e^{-\dot{\imath} \varphi }}{r^{\sigma}} +
\frac{z e^{-\dot{\imath} \varphi }}{r^{\sigma}(r^{\sigma} |\xi|^{-\sigma} e^{\dot{\imath} \varphi }-z)}.
\end{equation*}
Therefore, using the contour integral representation (\ref{esln1}), we have
\begin{multline*}
|\xi|^{-\sigma} E_{\alpha,\beta}(e^{\dot{\imath} \varphi } r^{\sigma}|\xi|^{-\sigma})
=\frac{1}{2\pi \dot{\imath} \alpha} |\xi|^{-\sigma}\int_{H_{\omega}}
\frac{e^{z^{1/\alpha}}{
z^{(1-\beta)/\alpha}}}{
 z-e^{\dot{\imath} \varphi } \,r^{\sigma}|\xi|^{-\sigma}}dz\\
= - \frac{e^{-\dot{\imath} \varphi}r^{-\sigma}}{2\pi\alpha \dot{\imath}}\int_{H_{\omega}}
e^{z^{1/\alpha}}
z^{(1-\beta)/\alpha} dz+ \frac{e^{-\dot{\imath} \varphi}r^{-\sigma}}{2\pi\alpha \dot{\imath}}\int_{H_{\omega}}
\frac{e^{z^{1/\alpha}}{
z^{(1+\alpha-\beta)/\alpha}}}{
 z-e^{\dot{\imath} \varphi } \,r^{\sigma}|\xi|^{-\sigma}}dz \\
=-\frac{e^{-\dot{\imath} \varphi}
r^{-\sigma}}{\Gamma{(\beta-\alpha)}}
+ e^{-\dot{\imath} \varphi}r^{-\sigma}E_{\alpha,\beta-\alpha}(e^{\dot{\imath} \varphi } \,r^{\sigma}|\xi|^{-\sigma}).
\end{multline*}
Multiply through by $|\xi|^{\sigma}\phi(r)\overline{J}_{n}(r)$ and then integrate in
$r$ to obtain
\begin{multline}\label{sln1}
{\mathcal{M}_{\alpha,\beta,\varphi,\sigma}}(\xi)=
-\frac{e^{-\dot{\imath} \varphi}
}{\Gamma{(\beta-\alpha)}}|\xi|^{\sigma}
\int_{0}^{\infty}
r^{-\sigma}\phi(r)\overline{J}_{n}(r)dr\\
+e^{-\dot{\imath} \varphi}
|\xi|^{\sigma}
\int_{0}^{\infty}
r^{-\sigma}\phi(r)\overline{J}_{n}(r)E_{\alpha,\beta-\alpha}(e^{\dot{\imath} \varphi } \,r^{\sigma}|\xi|^{-\sigma})dr.
\end{multline}
Observe that, by (\ref{zrst2}), we have
\begin{equation*}
r^{-\sigma}\phi(r)|\overline{J}_{n}(r)| \lesssim r^{n-\sigma-1}\phi(r),
\end{equation*}
for all $r>0$. We also have  $r^{n-\sigma-1}\phi\in L^{1}([0,\infty))$ when $\sigma<n$. Furthermore, similarly to (\ref{ptws}), there exists a positive constant $C$ such that
\begin{equation*}
|E_{\alpha,\beta-\alpha}(e^{\dot{\imath} \varphi } r^{\sigma}|\xi|^{-\sigma})|\leq \frac{C|\xi|^{\sigma}}{
|\xi|^{\sigma}+r^{\sigma}}\leq C,
\end{equation*}
for all $r\geq 0$, and all $\xi\in \mathbb{R}^{n}$.
It therefore follows by dominated convergence that
\begin{equation*}
\lim_{\xi\rightarrow 0}\int_{0}^{\infty}
r^{-\sigma}\phi(r)\overline{J}_{n}(r)E_{\alpha,\beta-\alpha}(e^{\dot{\imath} \varphi } r^{\sigma}|\xi|^{-\sigma})dr=0,
\end{equation*}
when $\sigma<n$. The asymptotic formula (\ref{lzl}) follows from (\ref{sln1}) together with the latter limit.\par\medskip
Next, we prove (\ref{lzeq}). Let $\sigma=n$, and let $|\xi|<1$.
Split
\begin{equation}\label{sptab} {\mathcal{M}_{\alpha,\beta,\varphi,n}}=
\mathcal{M}^{-}_{\alpha,\beta,\varphi,n}+
\mathcal{M}^{+}_{\alpha,\beta,\varphi,n},
\end{equation}
where
\begin{eqnarray*}
\mathcal{M}^{-}_{\alpha,\beta,\varphi,n}(\xi)&:=&
\int_{0}^{|\xi|}\phi(r) \overline{J}_{n}(r) E_{\alpha,\beta}(e^{\dot{\imath} \varphi } r^{n}|\xi|^{-n})\,dr,\\
\mathcal{M}^{+}_{\alpha,\beta,\varphi,n}(\xi)&:=&
\int_{|\xi|}^{2}\phi(r) \overline{J}_{n}(r) E_{\alpha,\beta}(e^{\dot{\imath} \varphi } r^{n}|\xi|^{-n})\,dr.
\end{eqnarray*}
Substituting for
$\overline{J}_{n}$ from (\ref{jnr1}), then changing variables $r\rightarrow r|\xi|$, we get
\begin{equation*}
|\xi|^{-n}\mathcal{M}^{-}_{\alpha,\beta,\varphi,n}(\xi)=
\int_{0}^{1}\phi(r|\xi|)
\left(a_{n} +O(r|\xi|) \right)
r^{n-1}
E_{\alpha,\beta}(e^{\dot{\imath} \varphi } r^{n})\,dr.
\end{equation*}
Since $r\mapsto E_{\alpha,\beta}(e^{\dot{\imath} \varphi } r^{n})$ is uniformly continuous on $[0,1]$, for all $n\geq 1$, and since
\begin{equation*}
\phi(r|\xi|)
\left|a_{n} +O(r|\xi|) \right|\lesssim 1,
\end{equation*}
for all $0\leq r\leq 1$ and all $\xi$ with $|\xi|<1$, we may use dominated convergence to
compute
\begin{equation}\label{axi0}
\lim_{\xi\rightarrow 0}|\xi|^{-n}\mathcal{M}^{-}_{\alpha,\beta,\varphi,n}(\xi)=
a_{n} \int_{0}^{1}r^{n-1}
E_{\alpha,\beta}(e^{\dot{\imath} \varphi } r^{n})\,dr,
\end{equation}
a finite number for all $n\geq 1$.
It remains to determine the asymptotic behaviour of
$\mathcal{M}^{+}_{\alpha,\beta,\varphi,n}(\xi)$ as $\xi\rightarrow 0$. Substituting for
$\overline{J}_{n}$ from (\ref{jnr1}) and
for $E_{\alpha,\beta}$ from (\ref{esln1}) into
the definition of ${\mathcal{M}^{+}_{\alpha,\beta,\varphi,n}}$, we see that
\begin{equation*}
\mathcal{M}^{+}_{\alpha,\beta,\varphi,n}(\xi)=
\frac{1}{2\pi \dot{\imath} \alpha}
\int_{|\xi|}^{2}\int_{H_{\omega}}\phi(r)
\left(a_{n} +O(r) \right) r^{n-1}
\frac{e^{z^{1/\alpha}}{
z^{(1-\beta)/\alpha}}}{
z-e^{\dot{\imath} \varphi }
\,r^{n}|\xi|^{-n}}dz\,dr.
\end{equation*}
It is important to note here that the constant involved in the $O$-notation in the last integral is independent of $z$. Using Fubini's theorem, we can write
\begin{equation}\label{bfbn}
\mathcal{M}^{+}_{\alpha,\beta,\varphi,n}(\xi)=\frac{a_{n}}{2\pi \dot{\imath} \alpha}
\int_{H_{\omega}}{e^{z^{1/\alpha}}{
z^{(1-\beta)/\alpha}}} \int_{|\xi|}^{2}\frac{\phi(r)
\left(1+O(r) \right)}{
 z-e^{\dot{\imath} \varphi }
\,r^{n}|\xi|^{-n}}r^{n-1}\,dr\,dz.
\end{equation}
The use of
Fubini's theorem here can be justified as follows: parameterize the contour $H_{\omega}$ to write
\begin{multline}\label{se1}
\int_{H_{\omega}}
\frac{e^{z^{1/\alpha}}{
z^{(1-\beta)/\alpha}}}{
z-r e^{\dot{\imath} \varphi }}dz=
\alpha
\sum_{\pm}\pm
e^{\pm\dot{\imath}
\frac{\omega}{\alpha}(1-\beta)}
\int_{1}^{\infty}
\frac{e^{e^{\pm\dot{\imath}
\frac{\omega}{\alpha}}{\rho}}
\,{\rho}^{\alpha-\beta}}{
{\rho}^{\alpha}-e^{\dot{\imath} (\varphi\mp\omega) }r}d{\rho}\\
+\dot{\imath}\int_{-\omega}^{\omega}
\frac{e^{ e^{\dot{\imath}{\theta}/\alpha}}\,
e^{\dot{\imath}{\theta}(1-\beta)/\alpha}}{
1-e^{\dot{\imath} (\varphi-{\theta})}r} d{\theta},\quad
r\geq 0,
\end{multline}
upon a change of variables.
The sum before the first integral on the right side of (\ref{se1}) is to be understood as the sum of the two terms that correspond to the
two indicated combinations of the $+$ and $-$ signs.
Then, using the inequalities (\ref{zrst11}) and (\ref{zrst12}) that imply, respectively, the uniform estimates
\begin{equation*}
\min_{\rho\geq 1}{|\rho^{\alpha}-e^{\dot{\imath} (\varphi\pm \omega )}
\,r^{n}|\xi|^{-n}|}\geq \varphi_{0},\quad
\min_{|\theta|\geq \omega}{|1-e^{\dot{\imath} (\varphi-\theta )}\,r^{n}|\xi|^{-n}|}\geq \varphi_{0},
\end{equation*}
we see that
\begin{equation*}
\int_{H_{\omega}}
\left|\frac{e^{z^{1/\alpha}}{
z^{(1-\beta)/\alpha}}}{
z-e^{\dot{\imath} \varphi }
\,r^{n}|\xi|^{-n}}\right||dz|\lesssim
2\alpha
\int_{1}^{\infty}
e^{\cos{(\omega/\alpha)}\rho}
\,{\rho}^{\alpha-\beta}d{\rho}+\int_{-\omega}^{\omega}
e^{\cos{(\theta/\alpha)}} d{\theta}.
\end{equation*}
Observe that
$(r,\rho)\mapsto \phi(r) r^{n-1} e^{\rho\cos{({\omega}/{\alpha})
}}{{\rho}^{\alpha-\beta}}$ is an $L^{1}([0,\infty)\times [1,\infty))$ function, and
$(r,\theta)\mapsto \phi(r) r^{n-1} e^{\cos{({\theta}/{\alpha})}}$
is clearly in $L^{1}([0,\infty)\times [-\omega,\omega])$.\par\medskip
Further split the inner integral in (\ref{bfbn}) into the sum of the two integrals
\begin{eqnarray*}
I_{1}(z,\xi) &:=& \int_{|\xi|}^{2}
\frac{r^{n-1}}{ z-e^{\dot{\imath} \varphi } \,r^{n}|\xi|^{-n}}dr, \\
I_{2}(z,\xi)&:=& |\xi|^{n}
\int_{|\xi|}^{2}(\phi(r)-1+\phi(r)O(r))
\frac{r^{n-1}}{|\xi|^{n}z-e^{\dot{\imath} \varphi } \,r^{n}}dr.
\end{eqnarray*}
Consider first the integral $I_{1}$. Changing variables $|\xi|^{-n}r^{n}\rightarrow r$ yields
\begin{eqnarray*}
I_{1}(z,\xi) &=& \frac{1}{n}|\xi|^{n}
\int_{1}^{2^{n}|\xi|^{-n}}
\frac{dr}{ z-e^{\dot{\imath} \varphi } \,r}\\
   &=&  -\frac{e^{-\dot{\imath} \varphi } }{n}|\xi|^{n}
\int_{1}^{2^{n}|\xi|^{-n}}
\frac{dr}{r-e^{-\dot{\imath} \varphi } \,z}.
\end{eqnarray*}
Using the identity
\begin{equation*}
\frac{1}{r-e^{-\dot{\imath} \varphi}z}=
\frac{1}{r}
+\frac{e^{-\dot{\imath} \varphi}z}{r(r-e^{-\dot{\imath} \varphi}z)},
\end{equation*}
we see that
\begin{equation*}
I_{1}(z,\xi) = -\frac{e^{-\dot{\imath} \varphi } }{n}|\xi|^{n}
\int_{1}^{2^{n}|\xi|^{-n}}\left(
\frac{1}{r}
+\frac{e^{-\dot{\imath} \varphi}z}{r(r-e^{-\dot{\imath} \varphi}z)}\right)dr.
\end{equation*}
A straightforward calculation gives
\begin{equation*}
\int_{1}^{2^{n}|\xi|^{-n}}
\frac{dr}{r(r-e^{-\dot{\imath} \varphi}z)}=
-\frac{e^{\dot{\imath} \varphi}}{z}\left(
\log{(1-e^{-\dot{\imath} \varphi}z)}-
\log{(1-e^{-\dot{\imath} \varphi}2^{-n}|\xi|^{n}z)}\right).
\end{equation*}
And since $\int_{1}^{2^{n}|\xi|^{-n}}
r^{-1}dr=\log{2^{n}}-n\log{|\xi|}$, we have
\begin{multline}\label{i1sgqln}
I_{1}(z,\xi) = -\frac{e^{-\dot{\imath} \varphi } }{n}
\left(-n|\xi|^{n}\log{|\xi|}+\log{\left(\frac{2^{n}}{1-e^{-\dot{\imath} \varphi}z}\right)}|\xi|^{n}\right.\\ +
|\xi|^{n}\log{\left(1-2^{-n}e^{-\dot{\imath} \varphi} z|\xi|^{n}\right)}\bigg).
\end{multline}
It is not difficult to verify that
\begin{equation}\label{ntlgx0}
\left|\log{\left(1-2^{-n}e^{-\dot{\imath} \varphi} z|\xi|^{n}\right)}\right| \lesssim
|z|^{2} |\xi|^{n},
\end{equation}
for all $z\in H_{\omega}$.
One way to see this is to write
\begin{equation*}
\log{\left(1-2^{-n}e^{-\dot{\imath} \varphi} z|\xi|^{n}\right)}=\int_{1} ^{1-2^{-n}e^{-\dot{\imath} \varphi} z|\xi|^{n}}   \frac{d w}{w},
\end{equation*}
where the integral is taken along the line segment connecting the endpoints in the complex plane. So, by the estimate (\ref{zrst1}) of Lemma \ref{infrt}, we have the bound
\begin{eqnarray*}
\left|\log{\left(1-2^{-n}e^{-\dot{\imath} \varphi} z|\xi|^{n}\right)}\right|&\leq &\max_{0\leq t\leq 1}{\frac{2^{-n} |z| |\xi|^{n}}{{|1-2^{-n}e^{-\dot{\imath} \varphi} z|\xi|^{n} t|}}} \\   &= &\frac{2^{-n} |z|^{2} |\xi|^{n}}{\min_{0\leq t\leq 1}{|\overline{z}-2^{-n}e^{-\dot{\imath} \varphi} |z|^{2}|\xi|^{n} t|}}\\
&= &\frac{2^{-n} |z|^{2} |\xi|^{n}}{\min_{0\leq t\leq 1}{|z-2^{-n}e^{\dot{\imath} \varphi} |z|^{2}|\xi|^{n} t|}}\\
&\leq &\frac{|z|^{2} |\xi|^{n}}{2^{n} \varphi_{0}},
\end{eqnarray*}
for all $z\in H_{\omega}$. Since $\rho\mapsto e^{\rho\cos{\left({\omega}/{
\alpha}\right)
}}{\rho^{\gamma}}$ is absolutely integrable on $[1,\infty)$ for all $\gamma\in \mathbb{R}$, the estimate (\ref{ntlgx0}) implies
\begin{multline*}
\left|
\int_{H_{\omega}}{e^{z^{1/\alpha}}{
z^{(1-\beta)/\alpha}}}\log{\left(1-2^{-n}e^{-\dot{\imath} \varphi} z|\xi|^{n}\right)}dz\right|\\
\begin{aligned}[b]
&\lesssim |\xi|^{n}
\int_{H_{\omega}}e^{|z|^{1/\alpha}\cos{\left({\arg{z}}/{\alpha}\right)
}}{|z|^{{(1-\beta)}/{\alpha}+2}}|dz|\\
&\leq |\xi|^{n}\left(
2\alpha \int_{1}^{\infty}e^{{\rho}
\cos{\left({\omega}/{\alpha}\right)
}}{\rho}^{3\alpha-\beta} d\rho+
\int_{-\omega}^{\omega}e^{\cos{({\theta}/{\alpha})}}d\theta\right)\\
&=O(|\xi|^{n}).
\end{aligned}
\end{multline*}
One can similarly show that the integral
\begin{equation*}
\widetilde{A}_{\alpha,\beta,\varphi,n}:=-\frac{e^{-\dot{\imath}\varphi}}{2 \pi \alpha n\dot{\imath}} \int_{H_{\omega}}{e^{z^{1/\alpha}}{
z^{(1-\beta)/\alpha}}}
\log\left(\frac{2^{n}}{1-
e^{-\dot{\imath}\varphi}z}\right)dz
\end{equation*}
is absolutely convergent. With these remarks in mind, it follows from (\ref{i1sgqln}) that
\begin{multline}\label{i1sn1}
\frac{1}{2 \pi \alpha \dot{\imath}}\int_{H_{\omega}}{e^{z^{1/\alpha}}{
z^{(1-\beta)/\alpha}}}I_{1}(z,\xi)dz\\
=\frac{
e^{-\dot{\imath}\varphi}}{\Gamma(\beta-\alpha)}
{|\xi|^{n}\log{|\xi|}}+ \widetilde{A}_{\alpha,\beta,\varphi,n}|\xi|^{n}
+o(|\xi|^{n}),
\end{multline}
as $\xi\rightarrow 0$. Next, we consider $I_{2}$. First observe that
\begin{equation*}
|\xi|^{-n}I_{2}(z,\xi)=
\int_{|\xi|}^{2}(\phi(r)-1+\phi(r)O(r))
\frac{r^{n-1}}{|\xi|^{n}z-e^{\dot{\imath} \varphi } \,r^{n}}dr.
\end{equation*}
Using the estimate (\ref{zrst1}), we see that
\begin{equation}
\min_{z\in H_{\omega}}{|z-e^{\dot{\imath} \varphi } r^{n}|\xi|^{-n} |}\geq \varphi_{0} r^{n}|\xi|^{-n}.
\end{equation}
Thus, we have
\begin{eqnarray*}
\left|\frac{r^{n-1}}{|\xi|^{n}z-e^{\dot{\imath} \varphi } \,r^{n}}\right|
&=&\frac{r^{n}|\xi|^{-n}}{|z-e^{\dot{\imath} \varphi } \,r^{n}|\xi|^{-n}|}
\frac{1}{r}
\\
&\lesssim& \frac{1}{r},\quad r>0,
\end{eqnarray*}
for all $z\in H_{\omega}$ and $\xi\in \mathbb{R}^{n}$.
Since $\operatorname{supp}
\left(\phi-1\right)\cap[0,2]
\subseteq[1,2]$, one has $\left(\phi-1\right)/r+\phi\in L^{1}([0,2])$. Thus, by dominated convergence, we conclude that
\begin{multline*}\label{i23o0}
\frac{1}{2\pi \dot{\imath} \alpha}\lim_{\xi\rightarrow 0}
|\xi|^{-n}\int_{H_{\omega}}{e^{z^{1/\alpha}}{
z^{(1-\beta)/\alpha}}}
I_{2}(z,\xi)\,dz\\=-\frac{e^{-\dot{\imath}\varphi}}{
\Gamma{(\beta-\alpha)}}\int_{0}^{2}
\left(\frac{\phi(r)-1}{r}+\phi(r)O(1)\right)dr.
\end{multline*}
Combining (\ref{i1sn1}) with this last limit, it follows from
(\ref{bfbn}) that
\begin{equation}
\mathcal{M}^{+}_{\alpha,\beta,\varphi,n}(\xi)=\frac{
a_{n} e^{-\dot{\imath}\varphi}}{\Gamma(\beta-\alpha)}
{|\xi|^{n}\log{|\xi|}}+O(|\xi|^{n}),
\end{equation}
as $\xi\rightarrow 0$.
Together with (\ref{sptab}) and
the limit (\ref{axi0}), this proves
(\ref{lzeq}). \par\medskip
Lastly, we prove (\ref{lzgeq}). Suppose that $\sigma>n$ and let $|\xi|<1$. Change variables $r\rightarrow |\xi|r$ and then
substitute for
$\overline{J}_{n}$ from (\ref{jnr1}). We obtain
\begin{multline}\label{sgtn0}
\mathcal{M}_{\alpha,\beta,\varphi,\sigma}(\xi)
=|\xi|^{n}
\int_{0}^{\infty}\phi(|\xi|r)
\left(a_{n} +O(|\xi|r) \right)r^{n-1}
E_{\alpha,\beta}(e^{\dot{\imath} \varphi } r^{\sigma})dr\\
=a_{n}|\xi|^{n}\int_{0}^{\infty}r^{n-1}
E_{\alpha,\beta}(e^{\dot{\imath} \varphi } r^{\sigma})dr+
a_{n}|\xi|^{n}\int_{0}^{\infty}
\left(\phi(|\xi|r)-1\right)r^{n-1}
E_{\alpha,\beta}(e^{\dot{\imath} \varphi } r^{\sigma})dr
\\+
|\xi|^{n}
\int_{0}^{\infty}\phi(|\xi|r)O(|\xi|r)r^{n-1} E_{\alpha,\beta}(e^{\dot{\imath} \varphi } r^{\sigma})dr.
\end{multline}
Using (\ref{ptws0}) once again, we see that
\begin{equation*}
r^{n-1}|E_{\alpha,\beta}(e^{\dot{\imath} \varphi } r^{\sigma})|\leq
\frac{Cr^{n-1} }{1+r^{\sigma}},\quad r\geq 0,
\end{equation*}
for some positive constant $C$ independent of $r$. Note here that $r^{n-1}/(1+r^{\sigma})$ is
an $L^{1}([0,\infty))$ function if $\sigma>n$. Since
$0\leq \phi\leq 1$ and $\operatorname{supp}\phi\cap[0,\infty)\subseteq [0,2]$, one has
\begin{eqnarray*}
\left|\phi(|\xi|r)-1\right|&\lesssim& 1,\\
\phi(|\xi|r)\left|O(|\xi|r)\right|&\lesssim& 1,
\end{eqnarray*}
for all $r\geq 0$, and all $\xi$ such that $|\xi|<1$. Consequently,
by dominated convergence, we have
\begin{eqnarray*}
\lim_{\xi\rightarrow 0}
\int_{0}^{\infty}
\left(\phi(|\xi|r)-1\right)r^{n-1}
E_{\alpha,\beta}(e^{\dot{\imath} \varphi } r^{\sigma})dr&=&0,\\
\lim_{\xi\rightarrow 0}\int_{0}^{\infty}\phi(|\xi|r)O(|\xi|r)r^{n-1} E_{\alpha,\beta}(e^{\dot{\imath} \varphi } r^{\sigma})dr&=&0,
\end{eqnarray*}
when $\sigma>n$. Together with the last equality in (\ref{sgtn0}), this shows (\ref{lzgeq}).
\qed
\end{proof}
Before we proceed to determine the asymptotic behaviour of $\mathcal{N}_{\alpha,\beta,\varphi,\sigma}$, we prove the following technical lemma.
\begin{lemma}\label{lmprts}
Let $\alpha\in (0,2)$,
$\beta>0$, and $\pi\alpha/2<|\varphi|\leq \pi$.
Fix $\xi \in\mathbb{R}^{n}\setminus \{0\}$ and a nonnegative integer $\ell$. For each nonnegative integer $m$, let
\begin{equation*}
{Q_{m}(r)}:=
\sum_{0\leq k\leq m}
\widetilde{C}_{k,m}(\varphi,\sigma)r^{k\sigma}\int_{H_{\omega}}
\frac{e^{z^{1/\alpha}}{
z^{(1-\beta)/\alpha}}}{
(z-e^{\dot{\imath} \varphi } \,r^{\sigma})^{k+1}}dz,
\end{equation*}
where
\begin{equation*}
\widetilde{C}_{k,m}(\varphi,\sigma):=
e^{\dot{\imath} \varphi k}\sum_{0\leq j\leq k}(-1)^{j}\binom{k}{j}
{((k-j)\sigma-m+1)}^{(m)},\;\;\; 0\leq k\leq m,
\end{equation*}
with the Pochhammer symbol $
{x}^{(m)}=x(x+1)\cdots(x+m-1)$, $x\in \mathbb{R}$. \par\medskip  If $\sigma>(n-1)/2$, then
\begin{multline}\label{prtsq}
\int_{0}^{\infty}
r^{\frac{n-1}{2}-\ell} \cos{(r)} \psi(r) \int_{H_{\omega}}
\frac{e^{z^{1/\alpha}}{
z^{(1-\beta)/\alpha}}}{
z-  e^{\dot{\imath} \varphi} r^{\sigma}|\xi|^{-\sigma}}dz\,dr\\
=\sum_{\ell_{1}+\ell_{2}+\ell_{3}=N}
b_{\ell_{1},\ell_{2},\ell_{3}}
\int_{0}^{\infty}
r^{\frac{n-1}{2}-\ell-N+\ell_{2}}S_{N}{(r)}\psi^{(\ell_{2})}(r)
Q_{\ell_{3}}(r|\xi|^{-1})\,dr,
\end{multline}
for each positive integer $N$\!, where
\begin{equation*}
S_{N}{(r)}:=
\begin{cases}
-\sin{r},&N=4k-3;\\
-\cos{r},&N=4k-2;\\
\sin{r},&N=4k-1;\\
\cos{r},&N=4k,\;k\in \mathbb{N},
\end{cases}
\end{equation*}
and
\begin{equation*}
b_{\ell_{1},\ell_{2},\ell_{3}}:=
\tfrac{N!}{\ell_{1}!\,\ell_{2}!\,\ell_{3}!}
(\tfrac{n-1}{2}-\ell)_{\ell_{1}},
\end{equation*}
with the falling factorial notation
$(x)_{\ell_{1}}=x
(x-1)\cdots(x-\ell_{1}+1)$, $x\in \mathbb{R}$.
\end{lemma}
\begin{proof}
Let $\xi \in\mathbb{R}^{n}\setminus \{0\}$ and $j\geq 1$ be an integer. Then
\begin{eqnarray}
\nonumber \lim_{r\rightarrow \infty}
\frac{r^{j\sigma}}{|\xi|^{j\sigma}}\left|\int_{H_{\omega}}
\frac{e^{z^{1/\alpha}}{
z^{(1-\beta)/\alpha}}}{(
z-  e^{\dot{\imath} \varphi} r^{\sigma}|\xi|^{-\sigma})^{j}}dz\right|
&=&\left|\int_{H_{\omega}}
{e^{z^{1/\alpha}}{
z^{(1-\beta)/\alpha}}}dz\right|\\
\label{bndr1}&=&
\frac{1}{|\Gamma{(\beta-\alpha)}|}.
\end{eqnarray}
This follows by dominated convergence. To see that,
start by parameterizing the contour $H_{\omega}$ as in
(\ref{se1}) to obtain
\begin{multline}\label{prmtsm}
\int_{H_{\omega}}
\frac{e^{z^{1/\alpha}}{
z^{(1-\beta)/\alpha}}}{(
z-  e^{\dot{\imath} \varphi} r^{\sigma}|\xi|^{-\sigma})^{j}}dz
\\=\alpha
\sum_{\pm}\pm
e^{\pm\dot{\imath}
\frac{\omega}{\alpha}(1-\beta+(1-j)\alpha)}
\int_{1}^{\infty}
\frac{e^{e^{\pm\dot{\imath}
\frac{\omega}{\alpha}}{\rho}}
\,{\rho}^{\alpha-\beta}}{{(
\rho^{\alpha}-e^{\dot{\imath} (\varphi\mp\omega) } r^{\sigma}|\xi|^{-\sigma})^{j}}}d{\rho}\\
+\dot{\imath}
\int_{-\omega}^{\omega}
\frac{e^{ e^{\dot{\imath}{\theta}/\alpha}}\,
e^{\dot{\imath}{\frac{\theta}{\alpha}}(1-\beta+(1-j)\alpha)}}{
(1-e^{\dot{\imath} (\varphi-{\theta})}r^{\sigma}|\xi|^{-\sigma})^{j}} d{\theta}.
\end{multline}
Using the estimate (\ref{zrst11}) of Lemma \ref{infrt}, we find
\begin{eqnarray*}
\frac{r^{j\sigma}}{|\xi|^{j\sigma}}\left|\frac{e^{e^{\pm\dot{\imath}
\frac{\omega}{\alpha}}{\rho}}
\,{\rho}^{\alpha-\beta}}{{(
\rho^{\alpha}-e^{\dot{\imath} (\varphi\mp\omega) } r^{\sigma}|\xi|^{-\sigma})^{j}}}\right|
&\leq&\frac{e^{\rho
\cos{\left({\omega}/{\alpha}\right)
}}{{\rho}^{\alpha-\beta}}r^{j\sigma}|\xi|^{-j\sigma}}{
\varphi_{0}^{j}\max{\{r^{j\sigma}|\xi|^{-j\sigma},4^{j}\}}}\\
&\lesssim& e^{\rho
\cos{\left({\omega}/{\alpha}\right)
}}{{\rho}^{\alpha-\beta}},
\end{eqnarray*}
for all $r\geq0$ and $\rho\geq 1$. Similarly, by the estimate (\ref{zrst12}) of Lemma \ref{infrt}, we have
\begin{eqnarray*}
\frac{r^{j\sigma}}{|\xi|^{j\sigma}}\left|
\frac{e^{ e^{\dot{\imath}{\theta}/\alpha}}\,
e^{\dot{\imath}{\frac{\theta}{\alpha}}(1-\beta+(1-j)\alpha)}}{
(1-e^{\dot{\imath} (\varphi-{\theta})}r^{\sigma}|\xi|^{-\sigma})^{j}}\right|
&\leq&\frac{e^{
\cos{\left({\theta}/{\alpha}\right)
}}r^{j\sigma}|\xi|^{-j\sigma}}{
\varphi_{0}^{j}\max{\{r^{j\sigma}|\xi|^{-j\sigma},4^{j}\}}}\\
&\lesssim& e^{
\cos{\left({\theta}/{\alpha}\right)
}},
\end{eqnarray*}
for all $r\geq0$ and $|\theta|\leq \omega$.
As noted in the proof of Lemma \ref{lmsmpt1}, $\rho\mapsto e^{\rho
\cos{\left({\omega}/{\alpha}\right)
}}{{\rho}^{\gamma}}$ is in $L^{1}([1,\infty))$, for any
$\gamma\in \mathbb{R}$, and  $\theta\mapsto e^{\cos{\left({\theta}/{\alpha}\right)
}}$ is clearly in $L^{1}([-\omega,\omega])$.
This justifies applying dominated convergence.
Since $r\mapsto E_{\alpha,\beta}(e^{\dot{\imath} \varphi } r)$
is analytic, the function
\begin{equation*}
r\mapsto \int_{H_{\omega}}\frac{e^{z^{1/\alpha}}{
z^{(1-\beta)/\alpha}}}{z-e^{\dot{\imath} \varphi } \,r^{\sigma}}dz
\end{equation*}
is in fact smooth on the support of $\psi$. Using
Fa\`{a} di Bruno's formula (see e.g. \cite{comtet2012advanced}) for the derivative of a composition of smooth functions, we find
\begin{multline}\label{drvnt1}
\partial^{m}_{r}\int_{H_{\omega}}\frac{e^{z^{1/\alpha}}{
z^{(1-\beta)/\alpha}}}{z-e^{\dot{\imath} \varphi } \,r^{\sigma}|\xi|^{-\sigma}}dz=\int_{H_{\omega}}
e^{z^{1/\alpha}}{
z^{(1-\beta)/\alpha}}\,\partial^{m}_{r}\left(
\frac{1}{{z-e^{\dot{\imath} \varphi } \,r^{\sigma}|\xi|^{-\sigma}}}\right)dz\\
=\sum_{0\leq k\leq m}
\widetilde{C}_{k,m}(\varphi,\sigma) \frac{r^{k\sigma-m}}
{|\xi|^{k\sigma}}
\displaystyle\int_{H_{\omega}}
\frac{e^{z^{1/\alpha}}{
z^{(1-\beta)/\alpha}}}{
(z-e^{\dot{\imath} \varphi } \,r^{\sigma}|\xi|^{-\sigma})^{k+1}}dz\\
=r^{-m}Q_{m}(r/|\xi|),\quad\;
r\in \operatorname{supp}\psi.
\end{multline}
The interchange of the derivative and the integral here can be justified similarly to the limit (\ref{bndr1}).
Using (\ref{drvnt1}), we find that, for all integers $\ell\geq 0$ and $N\geq 1$, we have
\begin{multline}\label{nthdrv}
\partial^{N}_{r}\left(
r^{\frac{n-1}{2}-\ell}\psi(r)
\int_{H_{\omega}}\frac{e^{z^{1/\alpha}}{
z^{(1-\beta)/\alpha}}}{z-e^{\dot{\imath} \varphi } \,r^{\sigma}|\xi|^{-\sigma}}dz\right)\\
\begin{aligned}[b]
&=\sum_{\ell_{1}+\ell_{2}+\ell_{3}=N}
\tfrac{N!}{\ell_{1}!\,\ell_{2}!\,\ell_{3}!}\,
\partial^{\ell_{1}}_{r}\!\!\left(r^{\frac{n-1}{2}-\ell}\right)
\psi^{(\ell_{2})}(r)\,\partial^{\ell_{3}}_{r}\!\!
\int_{H_{\omega}}\frac{e^{z^{1/\alpha}}{
z^{(1-\beta)/\alpha}}}{z-e^{\dot{\imath} \varphi } \,r^{\sigma}|\xi|^{-\sigma}}dz\\
&=\sum_{\ell_{1}+\ell_{2}+\ell_{3}=N}
\tfrac{N!}{\ell_{1}!\,\ell_{2}!\,\ell_{3}!}
\left(\tfrac{n-1}{2}-\ell\right)_{\ell_{1}}\,
r^{\frac{n-1}{2}-\ell-\ell_{1}-\ell_{3}}\psi^{(\ell_{2})}(r)
Q_{\ell_{3}}(r/|\xi|)\\
&=\sum_{\ell_{1}+\ell_{2}+\ell_{3}=N}
b_{\ell_{1},\ell_{2},\ell_{3}}
r^{\frac{n-1}{2}-\ell-N+\ell_{2}}\psi^{(\ell_{2})}(r)
Q_{\ell_{3}}(r/|\xi|).
\end{aligned}
\end{multline}
Formula (\ref{prtsq}) follows then by integration by parts $N$ times. The boundary terms vanish each time, provided that $\sigma>(n-1)/2$. To see this, it suffices by (\ref{nthdrv}) to show that
\begin{equation*}
\lim_{r\rightarrow \infty}\left|S_{N}{(r)}\sum_{\ell_{1}+\ell_{2}+\ell_{3}=m}
b_{\ell_{1},\ell_{2},\ell_{3}}
r^{\frac{n-1}{2}-\ell-m+\ell_{2}}\psi^{(\ell_{2})}(r)
Q_{\ell_{3}}(r/|\xi|)\right|=0,
\end{equation*}
for each integer $0\leq m\leq N$. Observe that
\begin{equation*}
\operatorname{supp}\psi\subseteq
[1,\infty),\qquad\operatorname{supp}\psi^{(j)}\subseteq
[1,2],\quad \forall j\geq 1.
\end{equation*}
Thus, for all nonnegative integers
$m_{1}$ and $m_{2}$, one has
\begin{equation*}
r^{\frac{n-1}{2}-\ell-m_{2}}
|\psi^{(m_{1})}(r)|\leq
r^{\frac{n-1}{2}}|\psi^{(m_{1})}(r)|.
\end{equation*}
Consequently, for each integer $0\leq m\leq N$, we have
\begin{multline}\label{bndrtrm}
\left|\sum_{\ell_{1}+\ell_{2}+\ell_{3}=m}
b_{\ell_{1},\ell_{2},\ell_{3}}
r^{\frac{n-1}{2}-\ell-m+\ell_{2}}
\psi^{(\ell_{2})}(r)
Q_{\ell_{3}}(r/|\xi|)\right|\\
\leq
\sum_{\ell_{1}+\ell_{2}+\ell_{3}=m}
|b_{\ell_{1},\ell_{2},\ell_{3}}|
r^{\frac{n-1}{2}}
|\psi^{(\ell_{2})}(r)|
|Q_{\ell_{3}}(r/|\xi|)|\\
\leq C_{m}\, r^{\frac{n-1}{2}}
\sum_{0\leq j\leq m}
|Q_{j}(r/|\xi|)|,
\end{multline}
where
\begin{equation*}
C_{m}=\max_{\ell_{1}+\ell_{2}+\ell_{3}=m}{
|b_{\ell_{1},\ell_{2},\ell_{3}}|}\max_{0\leq j\leq m}
\max_{0\leq r<\infty}|\psi^{(j)}(r)|.
\end{equation*}
We deduce that the left side of (\ref{bndrtrm})
is bounded by
\begin{equation*}
C_{m}r^{\frac{n-1}{2}-\sigma}|\xi|^{\sigma}
\sum_{1\leq k\leq m}\sum_{0\leq j\leq k}
|\widetilde{C}_{j,k}(\varphi,\sigma)|
\frac{r^{(j+1)\sigma}}{
|\xi|^{(j+1)\sigma}}\left|\int_{H_{\omega}}
\frac{e^{z^{1/\alpha}}{
z^{(1-\beta)/\alpha}}}{
(z-e^{\dot{\imath} \varphi } \,r^{\sigma}|\xi|^{-\sigma})^{j+1}}dz\right|,
\end{equation*}
for each integer $0\leq m\leq N$. The latter function vanishes as $r\rightarrow \infty$, by the limit (\ref{bndr1}) and the simple fact that
\begin{equation*}
\lim_{r\rightarrow \infty}r^{\frac{n-1}{2}-\sigma}=0,
\end{equation*}
when $\sigma>(n-1)/2$.\qed
\end{proof}
\begin{lemma}\label{lmsmpt2}
Let $\alpha\in (0,2)$,
$\beta>0$, and $\pi\alpha/2<|\varphi|\leq \pi$.
If $ \sigma>{(n-1)}/2$, then
\begin{equation}\label{htsmptz}
{\mathcal{N}_{\alpha,\beta,\varphi,\sigma}}(\xi)
=D_{\alpha,\beta,\varphi,\sigma}|\xi|^{\sigma}+
O(|\xi|^{2\sigma}),
\end{equation}
as $\xi\rightarrow 0$, with the constant
$D_{\alpha,\beta,\varphi,\sigma}$ given by (\ref{hjx1n}) below. Furthermore, we have
\begin{equation}\label{htsmptfty1}
\mathcal{N}_{\alpha,\beta,\varphi,\sigma}(\xi)=
\widetilde{D}_{\beta,n}
+o(1)
\end{equation}
as $|\xi| \rightarrow +\infty$, with the constant
$\widetilde{D}_{\beta,n}$ given explicitly in (\ref{dtldabc}).
\end{lemma}
\begin{remark}\label{rmcs2}
As can be seen from (\ref{hjx1n}) below, the constant coefficient
$D_{\alpha,\beta,\varphi,\sigma}$
of the first term
in the asymptotic formula (\ref{htsmptz}) is a finite linear combination of absolutely convergent integrals.
We must take into account
the possibility that these terms might cancel out.
Clearly, $D_{\alpha,\beta,\varphi,\sigma}=0$
if $\alpha=\beta$, because of the factor $1/\Gamma{(\beta-\alpha)}$. The asymptotic formula (\ref{htsmptz})
should be understood in the sense that
\begin{equation*}
{\mathcal{N}_{\alpha,\beta,\varphi,\sigma}}(\xi)
=O(|\xi|^{2\sigma}),\quad \xi\rightarrow 0,
\end{equation*}
if $D_{\alpha,\beta,\varphi,\sigma}=0$.
This applies as well to the constant $\widetilde{D}_{\beta,n}$
in the first term in (\ref{htsmptfty1}). The asymptotic formula (\ref{htsmptfty1}) is to be understood in the sense that
\begin{equation*}
\mathcal{N}_{\alpha,\beta,\varphi,\sigma}(\xi)=
o(1),\quad |\xi|\rightarrow \infty,
\end{equation*}
when the sum
(\ref{dtldabc}) equals zero.
\end{remark}
\begin{proof}
Use the contour integral representation (\ref{esln1}) to write
\begin{equation}\label{hx1}
{\mathcal{N}_{\alpha,\beta,\varphi,\sigma}}(\xi)
=\frac{1}{2\pi \dot{\imath} \alpha}
\int_{0}^{\infty}\psi(r)\overline{J}_{n}(r)
\int_{H_{\omega}}\frac{e^{z^{1/\alpha}}{
z^{(1-\beta)/\alpha}}}{z-e^{\dot{\imath} \varphi } \,r^{\sigma}|\xi|^{-\sigma}}dz\,dr.
\end{equation}
Let $n>1$ and take $M=\lceil \frac{n-1}{2}\rceil+1$.
This choice of $M$ will be explained momentarily.
Use the expansion (\ref{bssl}) of Lemma \ref{bsllma}
to substitute for $J_{{n}/{2}-1}(r)$ into the right side of (\ref{hx1}) to see that
\begin{multline}\label{hjx}
{\mathcal{N}_{\alpha,\beta,\varphi,\sigma}}(\xi)
=\frac{1}{2\pi \dot{\imath} \alpha}
\sum_{0\leq \ell\leq \lceil \frac{n-1}{2}\rceil+1}
c_{\ell}(\tfrac{n}{2}-1)\int_{0}^{\infty}
\cos{(r+{\pi}\ell/{2}-\lambda_{*})}\\
r^{\frac{n-1}{2}-\ell}\psi(r)
\int_{H_{\omega}}\frac{e^{z^{1/\alpha}}{
z^{(1-\beta)/\alpha}}}{z-e^{\dot{\imath} \varphi } \,r^{\sigma}|\xi|^{-\sigma}}dz\,dr
+{\widetilde{\mathcal{N}}_{\alpha,\beta,\varphi,\sigma}}(\xi),
\end{multline}
where
\begin{equation*}
{\widetilde{\mathcal{N}}_{\alpha,\beta,\varphi,\sigma}}(\xi)
:=\frac{1}{2\pi \dot{\imath} \alpha}
\int_{0}^{\infty}r^{\frac{n}{2}}
L_{\frac{n}{2}-1}( r;\lceil \tfrac{n-1}{2}\rceil+1) \psi (r)
\int_{H_{\omega}}\frac{e^{z^{1/\alpha}}{
z^{(1-\beta)/\alpha}}}{z-e^{\dot{\imath} \varphi } \,r^{\sigma}|\xi|^{-\sigma}}dz\,dr.
\end{equation*}
If $n=1$, use formula (\ref{jd1}) to substitute for $J_{-1/2}(r)$. In this case,
the right side of (\ref{hjx}) reduces to the first term (the term that correspond to $\ell=0$).
Applying formula (\ref{prtsq}) of Lemma \ref{lmprts} with $N=\lceil \tfrac{n-1}{2}\rceil+2$, we find that
\begin{multline}\label{hjx1}
{\mathcal{N}_{\alpha,\beta,\varphi,\sigma}}(\xi)
=\frac{1}{2\pi \dot{\imath} \alpha}
\sum_{0\leq \ell\leq \lceil \frac{n-1}{2}\rceil+1}
c_{\ell}(\tfrac{n}{2}-1)\;
\sum_{\ell_{1}+\ell_{2}+\ell_{3}=N}
b_{\ell_{1},\ell_{2},\ell_{3}}\,\\
\int_{0}^{\infty}
r^{N_{\ell}+\ell_{2}}S_{\ell,N}{(r)}\psi^{(\ell_{2})}(r)
Q_{\ell_{3}}(r/|\xi|)dr
+{\widetilde{\mathcal{N}}_{\alpha,\beta,\varphi,\sigma}}(\xi),
\end{multline}
where
\begin{equation*}
S_{\ell,N}{(r)}:=
S_{N}\left(r+\frac{\pi}{2}\ell-\frac{\pi}{4}n+\frac{\pi}{4}\right),
\quad \ell\geq 0,
\end{equation*}
and $N_{\ell}:={(n-1)}/{2}-\ell-N$.
Notice that $N_{\ell}\leq-2$ for all $\ell\geq 0$.
Let us fix $0\leq
\ell\leq \lceil \frac{n-1}{2}\rceil+1$ and $0\leq \ell_{2}, \ell_{3}\leq N$. Substituting for $Q_{\ell_{3}}$
from its definition in Lemma \ref{lmprts}, the integral
\begin{equation}\label{ntl}
\int_{0}^{\infty}
r^{N_{\ell}+\ell_{2}}S_{\ell,N}{(r)}
\psi^{(\ell_{2})}(r)
Q_{\ell_{3}}(r/|\xi|)dr=\sum_{0\leq k\leq \ell_{3}}
\widetilde{C}_{k,\ell_{3}}(\varphi,\sigma)\mathcal{Q}_{k}(\xi),
\end{equation}
where
\begin{multline*}
\mathcal{Q}_{k}(\xi):={|\xi|^{-k\sigma}}\int_{0}^{\infty}
r^{N_{\ell}+\ell_{2}+k \sigma}
S_{\ell,N}{(r)}
\psi^{(\ell_{2})}(r)\\
\int_{H_{\omega}}
\frac{e^{z^{1/\alpha}}{
z^{(1-\beta)/\alpha}}}{
(z-e^{\dot{\imath} \varphi } \,{r^{\sigma}}{|\xi|^{-\sigma}})^{k+1}}dz\,dr,\quad
0\leq k\leq \ell_{3}.
\end{multline*}
 \par\medskip
First we prove (\ref{htsmptz}). Use the identity
\begin{equation*}
\frac{1}{z-e^{\dot{\imath} \varphi } \,{r^{\sigma}}{|\xi|^{-\sigma}}}
= -e^{-\dot{\imath}\varphi}\frac{|\xi|^{\sigma}}{r^{\sigma}}
\left(1+\frac{|\xi|^{\sigma} z}{e^{\dot{\imath} \varphi } \,r^{\sigma}-|\xi|^{\sigma} z}\right),
\end{equation*}
then apply the binomial theorem to get
\begin{multline}\label{nthw1}
\int_{H_{\omega}}
\frac{e^{z^{1/\alpha}}{
z^{(1-\beta)/\alpha}}}{
(z-e^{\dot{\imath} \varphi } \,{r^{\sigma}}{|\xi|^{-\sigma}})^{k+1}}dz
=(-1)^{k+1}
e^{-\dot{\imath}(k+1)\varphi}\frac{|\xi|^{(k+1)\sigma}}{r^{(k+1)\sigma}}
\\ \hfill \left(\int_{H_{\omega}}
e^{z^{1/\alpha}}{
z^{(1-\beta)/\alpha}}\,dz+
\sum_{j=1}^{k+1}\binom{k+1}{j}|\xi|^{j\sigma}
\int_{H_{\omega}}\frac{
e^{z^{1/\alpha}}{
z^{(1-\beta)/\alpha+j}}}{(e^{\dot{\imath} \varphi } \,r^{\sigma}-|\xi|^{\sigma} z)^{j}}dz\right).
\end{multline}
An adaptation of the estimate (\ref{zrst1}) of Lemma \ref{infrt} implies that
\begin{equation*}
\min_{r\geq 1,\,\xi\in\mathbb{R}^{n}\!,\,z\in H_{\omega}}{||\xi|^{\sigma} z -r e^{\dot{\imath} \varphi }|}\gtrsim 1.
\end{equation*}
Employing this estimate, we see that
\begin{multline*}
\left|\int_{H_{\omega}}\frac{
e^{z^{1/\alpha}}{
z^{(1-\beta)/\alpha+j}}}{(e^{\dot{\imath} \varphi } \,r^{\sigma}-|\xi|^{\sigma} z)^{j}}dz\right|\lesssim
\int_{H_{\omega}}e^{|z|^{1/\alpha}\cos{\left({\arg{z}}/{\alpha}\right)
}}{|z|^{{(1-\beta)}/{\alpha}+j}}|dz|\\
\leq
2\alpha\int_{1}^{\infty}e^{{\rho}
\cos{\left({\omega}/{\alpha}\right)
}}{\rho}^{(1+j)\alpha-\beta} d\rho+
\int_{-\omega}^{\omega}e^{\cos{({\theta}/{\alpha})}}d\theta,
\end{multline*}
for all $r\geq 1$ and any $\xi\in \mathbb{R}^{n}$.
The right side of the latter inequality exists and is finite for every $j\geq 1$, as demonstrated in the proof of Lemma \ref{lmsmpt1}.
It follows then from (\ref{nthw1}) that
\begin{multline}\label{hwz1}
\frac{|\xi|^{-k\sigma}}{2\pi \dot{\imath} \alpha}\int_{H_{\omega}}
\frac{e^{z^{1/\alpha}}{
z^{(1-\beta)/\alpha}}}{
(z-e^{\dot{\imath} \varphi } \,{r^{\sigma}}{|\xi|^{-\sigma}})^{k+1}}dz\\
=\left(\frac{(-1)^{k+1}
e^{-\dot{\imath}(k+1)\varphi}}{\Gamma{(\beta-\alpha)}}
|\xi|^{\sigma}+
O(|\xi|^{2\sigma})\right)r^{-(k+1)\sigma},
\end{multline}
as $\xi\rightarrow 0$, for all $r\geq 1$. The implicit constant in the $O$-notation in (\ref{hwz1}) is independent of $r$. Furthermore, since $N_{\ell}<-1$, for
every $\ell\geq 0$, and $\operatorname{supp}\psi^{(\ell_{2})}\subseteq
[1,2]$ for all $\ell_{2}>0$, we have that
$r\mapsto r^{N_{\ell}+\ell_{2}-\sigma}\psi^{(\ell_{2})}(r)$ is
an $L^{1}([0,\infty))$ function.
Hence, using (\ref{hwz1}), we deduce that
\begin{multline*}
\frac{1}{2\pi \dot{\imath}\alpha}\mathcal{Q}_{k}(\xi)=\frac{(-1)^{k+1}
e^{-\dot{\imath}(k+1)\varphi}}{\Gamma{(\beta-\alpha)}}
|\xi|^{\sigma} \int_{0}^{\infty}
r^{N_{\ell}+\ell_{2}-\sigma}
S_{\ell,N}{(r)}
\psi^{(\ell_{2})}(r)dr\\ +
O(|\xi|^{2\sigma}),
\end{multline*}
as $\xi\rightarrow 0$. By the estimate (\ref{lmrnt}), we have
\begin{equation*}
r^{\frac{n}{2}}\left|L_{\frac{n}{2}-1}(r;\lceil \tfrac{n-1}{2}\rceil+1)\right|\lesssim
r^{\frac{n-1}{2}-\lceil \frac{n-1}{2}\rceil-2}\leq
r^{-2},\qquad r\geq 1.
\end{equation*}
Thus, $r^{\frac{n}{2}}L_{{n}/{2}-1}( r;\lceil \tfrac{n-1}{2}\rceil+1) \psi (r)$ is absolutely integrable on $[0,\infty[$, and using (\ref{hwz1}) with $k=0$  yields
\begin{equation*}
{\widetilde{\mathcal{N}}_{\alpha,\beta,\varphi,\sigma}}(\xi)
=\frac{-
e^{-\dot{\imath}\varphi}}{\Gamma{(\beta-\alpha)}}
|\xi|^{\sigma}
\int_{0}^{\infty}r^{\frac{n}{2}-\sigma}
L_{\frac{n}{2}-1}( r;\lceil \tfrac{n-1}{2}\rceil+1) \psi (r)\,dr+
O(|\xi|^{2\sigma}),
\end{equation*}
as $\xi\rightarrow 0$. Substitute for $\mathcal{Q}_{k}$ into (\ref{ntl}), and then insert the result into the sum on the right side of (\ref{hjx1}). Substitute also for
${\widetilde{\mathcal{N}}_{\alpha,\beta,\varphi,\sigma}}$
from the latter formula into (\ref{hjx1}). We obtain
(\ref{htsmptz}) with the constant
\begin{multline}\label{hjx1n}
D_{\alpha,\beta,\varphi,\sigma}
=\bigg(
\sum_{0\leq \ell\leq \lceil \frac{n-1}{2}\rceil+1} c_{\ell}(\tfrac{n}{2}-1)\;
\sum_{\ell_{1}+\ell_{2}+\ell_{3}=\lceil \frac{n-1}{2}\rceil+2}
b_{\ell_{1},\ell_{2},\ell_{3}}\\
\left.\sum_{0\leq k\leq \ell_{3}}(-1)^{k+1}
e^{-\dot{\imath}(k+1)\varphi}
\widetilde{C}_{k,\ell_{3}}(\varphi,\sigma)\right.\\
\left.
 \int_{0}^{\infty}
r^{\frac{n-1}{2}-\lceil \frac{n-1}{2}\rceil-2-\ell+\ell_{2}-\sigma}
S_{\ell,\lceil \tfrac{n-1}{2}\rceil+2}{(r)}
\psi^{(\ell_{2})}(r)dr\right.\\
-
e^{-\dot{\imath}\varphi}
\int_{0}^{\infty}r^{\frac{n}{2}-\sigma}
L_{\frac{n}{2}-1}( r;\lceil \tfrac{n-1}{2}\rceil+1)\, \psi (r)\,dr\bigg)\frac{1}{\Gamma{(\beta-\alpha)}},
\end{multline}
having chosen $N=\lceil \tfrac{n-1}{2}\rceil+2$. \par\medskip
We turn our attention to
the asymptotic behaviour of
${\mathcal{N}_{\alpha,\beta,\varphi,\sigma}}(\xi)$ when $|\xi| \rightarrow \infty$. The proof of (\ref{htsmptfty1}) is based on the same idea as that of (\ref{htsmptz}), but the details are a little more involved. The starting point is formula (\ref{hjx1})
along with the integral (\ref{ntl}).
Observe that
$r^{N_{\ell}+\ell_{2}}\psi^{(\ell_{2})} \in L^{1}([0,\infty))$, for every $\ell,\ell_{2}\geq 0$, if $N=\lceil \tfrac{n-1}{2}\rceil+2$.
By (\ref{q1mft}), one has
\begin{multline}\label{q00}
\frac{1}{2\pi \alpha \dot{\imath}}\mathcal{Q}_{0}(\xi)=\int_{0}^{\infty}
r^{N_{\ell}+\ell_{2}}S_{\ell,N}{(r)} \psi^{(\ell_{2})}(r)
E_{\alpha,\beta}(e^{\dot{\imath} \varphi } r^{\sigma}|\xi|^{-\sigma})\,dr\\
=\frac{1}{\Gamma{(\beta)}}\int_{0}^{\infty}
r^{N_{\ell}+\ell_{2}}S_{\ell,N}{(r)} \psi^{(\ell_{2})}(r)\,dr
 \\+
e^{\dot{\imath} \varphi }\int_{0}^{\infty}
r^{N_{\ell}+\ell_{2}}S_{\ell,N}{(r)} \psi^{(\ell_{2})}(r)
r^{\sigma}|\xi|^{-\sigma}
E_{\alpha,\alpha+\beta}(e^{\dot{\imath} \varphi } r^{\sigma}|\xi|^{-\sigma})\,dr.
\end{multline}
Using (\ref{ptws0}), we have the uniform estimate
\begin{equation}\label{unfrmest}
r^{\sigma}|\xi|^{-\sigma}
\left|E_{\alpha,\alpha+\beta}(e^{\dot{\imath} \varphi } r^{\sigma}|\xi|^{-\sigma})\right|\leq
\frac{C r^{\sigma}|\xi|^{-\sigma}}{1+r^{\sigma}|\xi|^{-\sigma}}\leq C,
\end{equation}
for some positive number $C$. By dominated convergence, the last integral on the right side of (\ref{q00}) is convergent to zero as $|\xi|\rightarrow \infty$. In other words,
\begin{equation}\label{q01}
\frac{1}{2\pi \alpha \dot{\imath}}\mathcal{Q}_{0}(\xi)
=\frac{1}{\Gamma{(\beta)}}\int_{0}^{\infty}
r^{N_{\ell}+\ell_{2}} S_{\ell,N}{(r)}\psi^{(\ell_{2})}(r)\,dr
+o(1),\quad |\xi|\rightarrow \infty.
\end{equation}
The integral ${\widetilde{\mathcal{N}}_{\alpha,\beta,\varphi,\sigma}}(\xi)$
can be treated similarly to $\mathcal{Q}_{0}(\xi)$.
We have
\begin{multline}\label{ntldfn}
{\widetilde{\mathcal{N}}_{\alpha,\beta,\varphi,\sigma}}(\xi)=
\int_{0}^{\infty}r^{\frac{n}{2}}
L_{\frac{n}{2}-1}( r;\lceil \tfrac{n-1}{2}\rceil+1) \psi (r)
E_{\alpha,\beta}(e^{\dot{\imath} \varphi } r^{\sigma}|\xi|^{-\sigma})\,dr\\
=\frac{1}{\Gamma{(\beta)}}\int_{0}^{\infty}
r^{\frac{n}{2}}
L_{\frac{n}{2}-1}( r;\lceil \tfrac{n-1}{2}\rceil+1) \psi (r)
\,dr
 \\+
e^{\dot{\imath} \varphi }\int_{0}^{\infty}
r^{\frac{n}{2}}
L_{\frac{n}{2}-1}( r;\lceil \tfrac{n-1}{2}\rceil+1) \psi (r)
r^{\sigma}|\xi|^{-\sigma}
E_{\alpha,\alpha+\beta}(e^{\dot{\imath} \varphi } r^{\sigma}|\xi|^{-\sigma})\,dr\\
=\frac{1}{\Gamma{(\beta)}}\int_{0}^{\infty}
r^{\frac{n}{2}}
L_{\frac{n}{2}-1}( r;\lceil \tfrac{n-1}{2}\rceil+1) \psi (r)
\,dr+o(1),
\end{multline}
as $|\xi|\rightarrow \infty$,
by the uniform estimate (\ref{unfrmest}) and absolute integrability of
$r^{\frac{n}{2}}
L_{{n}/{2}-1}( r;\lceil {(n-1)}/{2}\rceil+1) \psi$ on $[0,\infty)$.
We are going to show that
\begin{equation}\label{qk10}
\lim_{|\xi|\rightarrow \infty}
\mathcal{Q}_{k}(\xi)=0,
\end{equation}
for all $k\geq 1$. Combined with (\ref{q01})
and (\ref{ntldfn}), that would conclude the proof of (\ref{htsmptfty1}) with
\begin{multline}\label{dtldabc}
\widetilde{D}_{\beta,n}:=
\bigg(
\sum_{0\leq \ell\leq \lceil \frac{n-1}{2}\rceil+1}c_{\ell}(\tfrac{n}{2}-1)\;
\sum_{\ell_{1}+\ell_{2}+\ell_{3}=N}
b_{\ell_{1},\ell_{2},\ell_{3}}\\
\int_{0}^{\infty}
r^{\frac{n-1}{2}-\lceil \frac{n-1}{2}\rceil-2-\ell+\ell_{2}}
S_{\ell,\lceil \tfrac{n-1}{2}\rceil+2}{(r)}\psi^{(\ell_{2})}(r)\,dr
\\
+\int_{0}^{\infty}
r^{\frac{n}{2}}
L_{\frac{n}{2}-1}( r;\lceil \tfrac{n-1}{2}\rceil+1) \psi (r)
\,dr\bigg)\frac{1}{\Gamma{(\beta)}}.
\end{multline}
Notice that
\begin{equation*}
\widetilde{C}_{0,\ell_{3}}(\varphi,\sigma)=
\begin{cases}
1,&\ell_{3}=0;
\vspace{0.15 cm}\\
0,&\ell_{3}\geq 1.
\end{cases}
\end{equation*}
It remains to prove (\ref{qk10}). Recall that
\begin{equation*}
\mathcal{Q}_{k}(\xi)=\int_{0}^{\infty}
r^{N_{\ell}+\ell_{2}}S_{\ell,N}{(r)}\psi^{(\ell_{2})}(r)
\int_{H_{\omega}}
\frac{e^{z^{1/\alpha}}{
z^{(1-\beta)/\alpha}}{r^{k\sigma}|\xi|^{-k\sigma}}}{
(z-e^{\dot{\imath} \varphi } \,{r^{\sigma}}{|\xi|^{-\sigma}})^{k+1}}dz\,dr.
\end{equation*}
Upon parameterizing the contour $H_{\omega}$ similarly to (\ref{prmtsm}), we see that
\begin{equation*}
|\mathcal{Q}_{k}(\xi)|\leq \alpha
(\mathcal{A}^{+}_{k}(\xi)+\mathcal{A}^{-}_{k}(\xi))+\mathcal{B}_{k}(\xi),
\end{equation*}
where
\begin{eqnarray*}
\mathcal{A}^{\pm}_{k}(\xi)&:=&
\int_{0}^{\infty}
\int_{1}^{\infty}
r^{N_{\ell}+\ell_{2}}|\psi^{(\ell_{2})}(r)|
\frac{e^{\rho\cos{(\omega/\alpha)}}
\,{\rho}^{\alpha-\beta} {r^{k\sigma}|\xi|^{-k\sigma}}}{|
\rho^{\alpha}-e^{\dot{\imath} (\varphi\pm\omega) } r^{\sigma}|\xi|^{-\sigma}|^{k+1}}d{\rho}dr,\\
\mathcal{B}_{k}(\xi)&:=&\int_{0}^{\infty}
\int_{-\omega}^{\omega}
r^{N_{\ell}+\ell_{2}}|\psi^{(\ell_{2})}(r)|
\frac{e^{ \cos{(\theta/\alpha)}}{r^{k\sigma}|\xi|^{-k\sigma}}}{
|1-e^{\dot{\imath} (\varphi-{\theta})}r^{\sigma}|\xi|^{-\sigma}|^{k+1}} d{\theta}dr.
\end{eqnarray*}
Since $\operatorname{supp}\phi\subseteq [0,2]$, $\operatorname{supp}\psi\subseteq [1,\infty)$, $0\leq \phi,\psi\leq 1$, and $\phi+\psi=1$, using the inequality (\ref{zrst11}) of Lemma \ref{infrt} one last time yields
\begin{align}\label{lstr1}
\nonumber &\frac{{r^{k\sigma}}{|\xi|^{-k\sigma}}}{|
\rho^{\alpha}-e^{\dot{\imath} (\varphi\pm\omega) } r^{\sigma}|\xi|^{-\sigma}|^{k+1}}\\
\nonumber &=\,\phi\left
({r}/{|\xi|}\right)\frac{{r^{k\sigma}}{|\xi|^{-k\sigma}}}{|
\rho^{\alpha}-e^{\dot{\imath} (\varphi\pm\omega) } r^{\sigma}|\xi|^{-\sigma}|^{k+1}}+
\,\psi\left
({r}/{|\xi|}\right)\frac{{r^{k\sigma}}{|\xi|^{-k\sigma}}}{|
\rho^{\alpha}-e^{\dot{\imath} (\varphi\pm\omega) } r^{\sigma}|\xi|^{-\sigma}|^{k+1}}\\
\nonumber &=
\frac{(r/|\xi|)^{k\sigma}\phi\left
({r}/{|\xi|}\right)}{|
\rho^{\alpha}-e^{\dot{\imath} (\varphi\pm\omega) } r^{\sigma}|\xi|^{-\sigma}|^{k+1}}+
\frac{\,\psi\left
({r}/{|\xi|}\right)}{(r/|\xi|)^{\sigma}}
\left(\frac{{r^{\sigma}}{|\xi|^{-\sigma}}}{|
\rho^{\alpha}-e^{\dot{\imath} (\varphi\pm\omega) } r^{\sigma}|\xi|^{-\sigma}|}
\right)^{k+1}\\
\nonumber &\leq \frac{1}{
\varphi_{0}^{k+1}}\frac{\phi\left
({r}/{|\xi|}\right)}{(|\xi|/r)^{k\sigma}}+
\frac{1}{
\varphi_{0}^{k+1}}\frac{\,\psi\left
({r}/{|\xi|}\right)}{(r/|\xi|)^{\sigma}}\\
&\leq \frac{2^{k\sigma}}{
\varphi_{0}^{k+1}}+\frac{1}{
\varphi_{0}^{k+1}},
\end{align}
for all $r\geq 0$, $\rho\geq 1$, and $\xi\neq 0$.
As an immediate consequence of the uniform estimate (\ref{lstr1}), the integrand in $\mathcal{A}^{\pm}_{k}(\xi)$
is bounded by a constant multiple of
\begin{equation*}
(r,\rho)\mapsto
r^{N_{\ell}+\ell_{2}}|\psi^{(\ell_{2})}(r)|
\,e^{\rho\cos{(\omega/\alpha)}}
\,{\rho}^{\alpha-\beta},
\end{equation*}
which is an $L^{1}([0,\infty)\times [1,\infty))$ function,
for each $\ell,\ell_{2}\geq 0$.
One can also use the inequality (\ref{zrst12})
of Lemma \ref{infrt} and proceed analogously to (\ref{lstr1}) to obtain
\begin{equation*}
\frac{{r^{k\sigma}|\xi|^{-k\sigma}}}{
|1-e^{\dot{\imath} (\varphi-{\theta})}r^{\sigma}|\xi|^{-\sigma}|^{k+1}}\lesssim 1,
\end{equation*}
for all $r\geq 0$, $-\omega\leq \theta\leq \omega$,
and $\xi\neq 0$. The integrand in $\mathcal{B}_{k}(\xi)$
is therefore bounded by a constant multiple of
\begin{equation*}
(r,\theta)\mapsto
r^{N_{\ell}+\ell_{2}}|\psi^{(\ell_{2})}(r)|
\,e^{\cos{(\theta/\alpha)}},
\end{equation*}
which is an $L^{1}([0,\infty)\times [-\omega,\omega])$ function,
for all $\ell,\ell_{2}\geq 0$. Finally, by dominated convergence,
we have
\begin{equation*}
\lim_{|\xi|\rightarrow \infty}
\mathcal{A}^{\pm}_{k}(\xi)=
\lim_{|\xi|\rightarrow \infty}
\mathcal{B}_{k}(\xi)=0.
\end{equation*}
This proves (\ref{qk10}). \qed
\end{proof}
\subsection{\textbf{Proof of Theorem \ref{thmsmt}}}
Theorem \ref{thmsmt} follows from
Lemma \ref{lmsmpt1} and Lemma \ref{lmsmpt2} together.
In (\ref{frtj}), we have written
\begin{equation*}
\mathcal{F}\left(E_{\alpha,\beta}(e^{\dot{\imath} \varphi } |\cdot|^{\sigma})
\right)(\xi)=(2\pi)^{n/2}|\xi|^{-n}
\left({\mathcal{M}_{\alpha,\beta,\varphi,\sigma}}(\xi)+
{\mathcal{N}_{\alpha,\beta,\varphi,\sigma}}(\xi)
\right).
\end{equation*}
The asymptotic formula (\ref{eab1}) follows then from (\ref{lzl})-(\ref{lzgeq}) of Lemma \ref{lmsmpt1} and (\ref{htsmptz}) of  Lemma \ref{lmsmpt2} combined.
Similarly, in view of (\ref{frtj}), formula (\ref{ltsmptnf}) of Lemma \ref{lmsmpt1} together with formula (\ref{htsmptfty1}) of Lemma \ref{lmsmpt2} give (\ref{eab2}).
\begin{remark}
The asymptotic behaviour of $\mathcal{F} E_{\alpha,\beta}(e^{\dot{\imath} \varphi} |\cdot|^{\sigma})(\xi)$
is given in Theorem \ref{thmsmt} in terms of
$O$-notation. The reason is that we must account for the possible cancellation that might occur upon adding an asymptotic expansion of
${\mathcal{M}_{\alpha,\beta,\varphi,\sigma}}(\xi)$ to that of
${\mathcal{N}_{\alpha,\beta,\varphi,\sigma}}(\xi)$ in accordance
with (\ref{frtj}). We also have to take into account the special cases where one or more of the coefficients $A_{\alpha,\beta,\varphi,\sigma}$,
$B_{\alpha,\beta,\varphi,\sigma}$, $D_{\alpha,\beta,\varphi,\sigma}$, or $\widetilde{D}_{\beta,n}$ is equal to zero, as discussed in Remark \ref{rmcs1} and Remark \ref{rmcs2}.
\end{remark}
\begin{remark}
Numerical tests presented in Appendix A below suggest that the asymptotic upper bound obtained in (\ref{eab1}) of Theorem \ref{thmsmt} is tight.
\end{remark}
\section*{Appendix A: Numerical tests}
We test the asymptotic formula (\ref{eab1}) of Theorem \ref{thmsmt} numerically for various values of the parameters $\alpha$, $\beta$, and $\sigma$, in the dimensions
$n=1,2$. We focus on the
case $(n-1)/2<\sigma\leq n$, where
$ E_{\alpha,\beta}(e^{\dot{\imath} \varphi} |\cdot|^{\sigma})$ is less regular and has slower decay.
\begin{figure}[H]
\centering
\begin{subfigure}{.48\textwidth}
  \centering
  \includegraphics[width=1\linewidth]{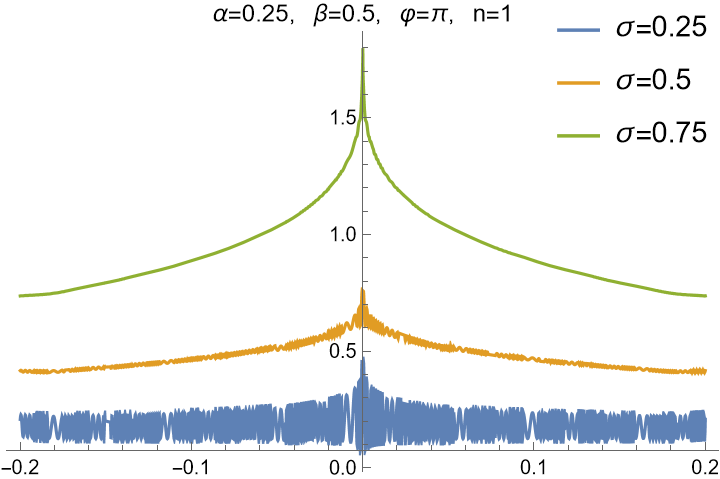}
\caption{$\alpha=0.25$, $\beta=0.5$}
\label{n111}
\end{subfigure}%
\hfill
\begin{subfigure}{.48\textwidth}
  \centering
  \includegraphics[width=1\linewidth]{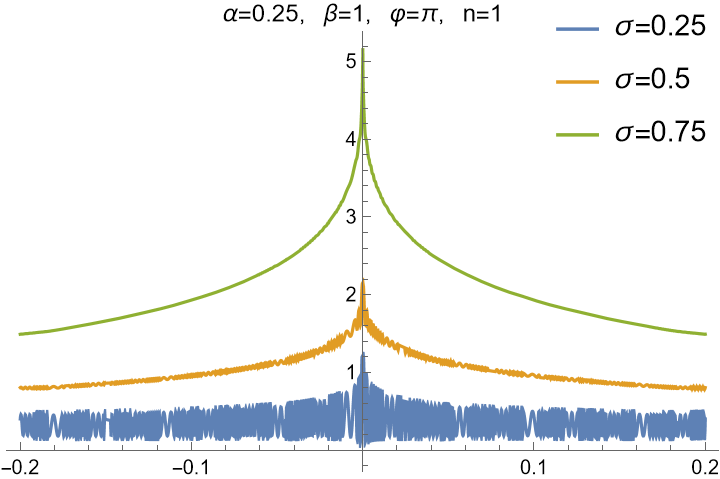}
\caption{$\alpha=0.25$, $\beta=1$}
\label{n112}
\end{subfigure}
%\caption{A figure with two subfigures}
%\label{112}
\end{figure}
\unskip
%---------------------------------------------
%---------------------------------------------
\begin{figure}[H]
\ContinuedFloat
\centering
\begin{subfigure}{.48\textwidth}
\centering
  \includegraphics[width=1\linewidth]{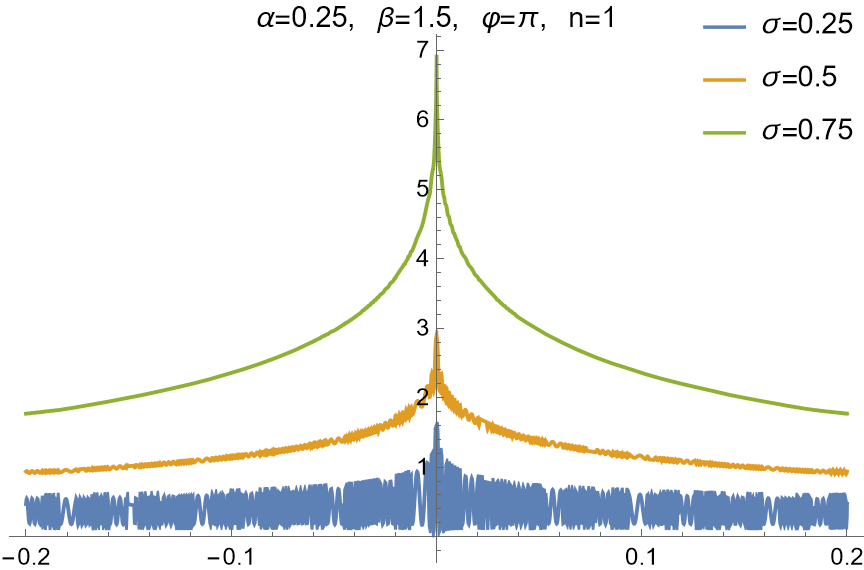}
\caption{$\alpha=0.25$, $\beta=1.5$}
\label{n113}
\end{subfigure}%
\hfill
\begin{subfigure}{.48\textwidth}
\centering
  \includegraphics[width=1\linewidth]{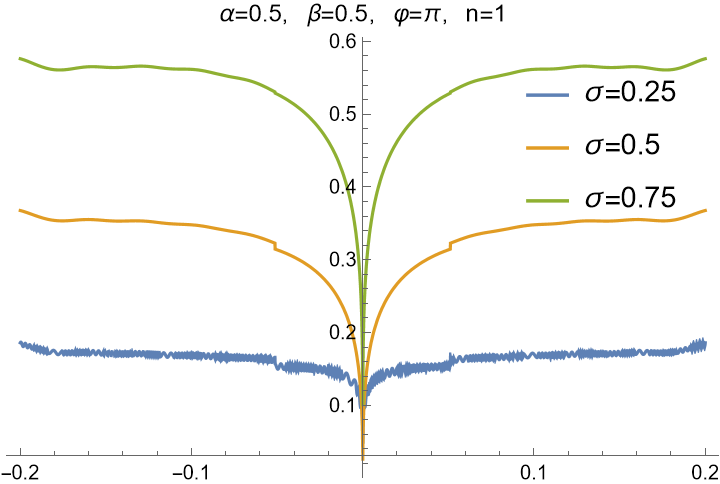}
%  \captionsetup{labelformat=empty}
\caption{$\alpha=0.5$, $\beta=0.5$}
\label{n121}
\end{subfigure}
%\caption{A figure with two subfigures}
%\label{111213}
\end{figure}
\unskip
%---------------------------------------------
%---------------------------------------------
\begin{figure}[H]
\ContinuedFloat
\centering
\begin{subfigure}{.48\textwidth}
\centering
  \includegraphics[width=1\linewidth]{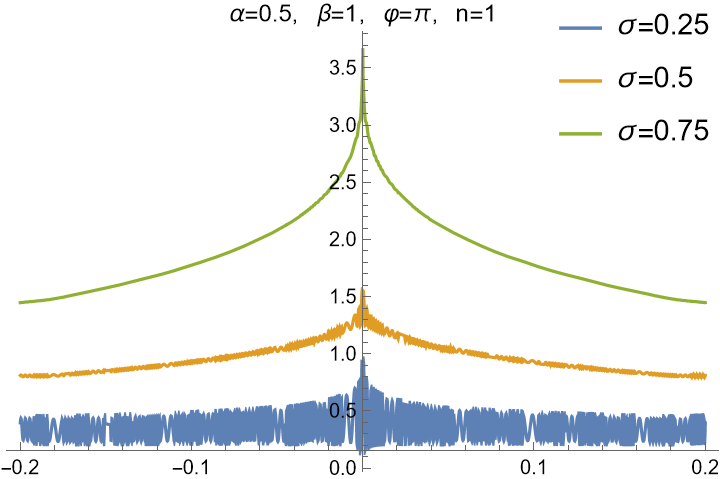}
\caption{$\alpha=0.5$, $\beta=1$}
\label{n122}
\end{subfigure}%
\hfill
\begin{subfigure}{.48\textwidth}
\centering
  \includegraphics[width=1\linewidth]{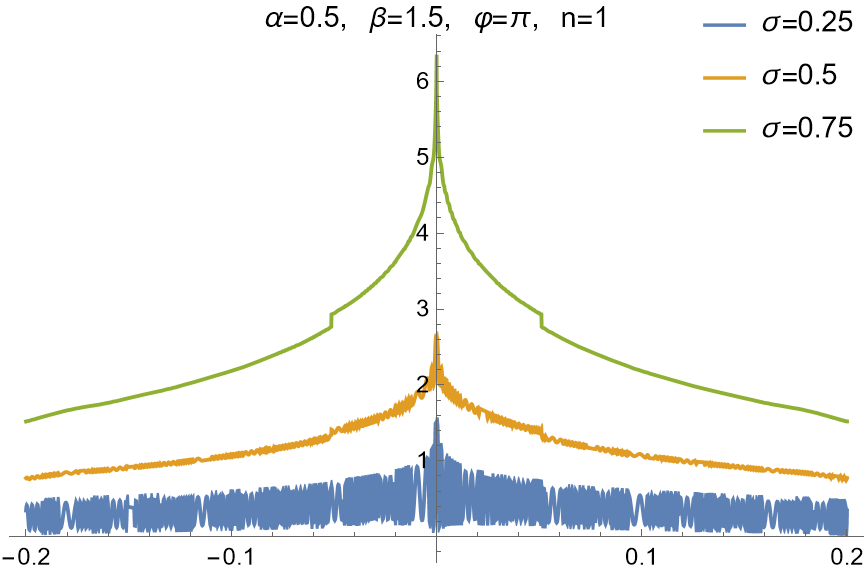}
%  \captionsetup{labelformat=empty}
\caption{$\alpha=0.5$, $\beta=1.5$}
\label{n123}
\end{subfigure}
%\caption{A figure with two subfigures}
%\label{111213}
\end{figure}
\unskip
%---------------------------------------------
%---------------------------------------------
\begin{figure}[H]
\ContinuedFloat
\centering
\begin{subfigure}{.48\textwidth}
\centering
  \includegraphics[width=1\linewidth]{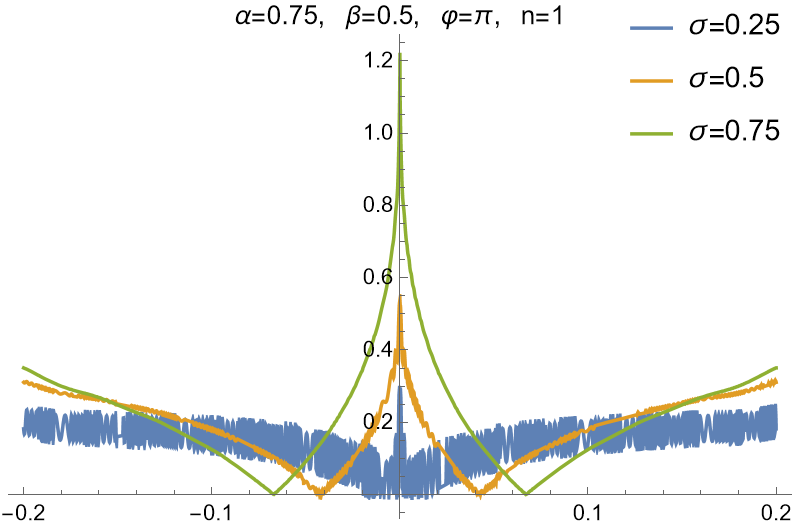}
\caption{$\alpha=0.75$, $\beta=0.5$}
\label{n131}
\end{subfigure}%
\hfill
\begin{subfigure}{.48\textwidth}
  \centering
  \includegraphics[width=1\linewidth]{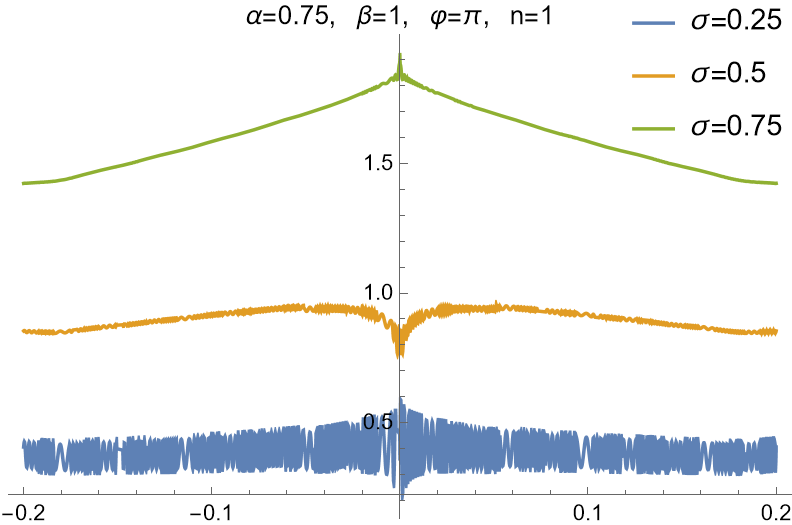}
%  \captionsetup{labelformat=empty}
\caption{$\alpha=0.75$, $\beta=1$}
\label{n132}
\end{subfigure}
%\caption{A figure with two subfigures}
%\label{111213}
\vspace{-1 cm}
\end{figure}
\unskip
%---------------------------------------------
%---------------------------------------------
\begin{figure}[H]
\ContinuedFloat
\centering
\begin{subfigure}{.48\textwidth}
\centering
  \includegraphics[width=1\linewidth]{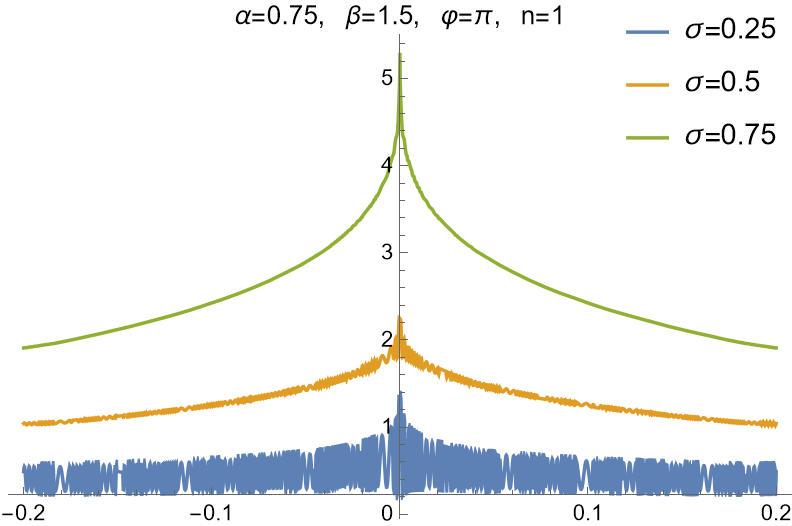}
\caption{$\alpha=0.75$, $\beta=1.5$}
\label{n133}
\end{subfigure}%
\vspace{1 cm}
\captionsetup{labelformat=empty}
\caption{Figures (1-i) through (1-ix) show $|\xi|^{n-\sigma}|\mathcal{F}
E_{\alpha,\beta}(- |\cdot|^{\sigma})(\xi)|$
near the origin in the one-dimensional case.}
%\label{111213}
\end{figure}
\unskip
%--------------------------------------
%---------------------------------------
\begin{figure}[H]
\centering
\begin{subfigure}{.48\textwidth}
  \centering
  \includegraphics[width=1\linewidth]{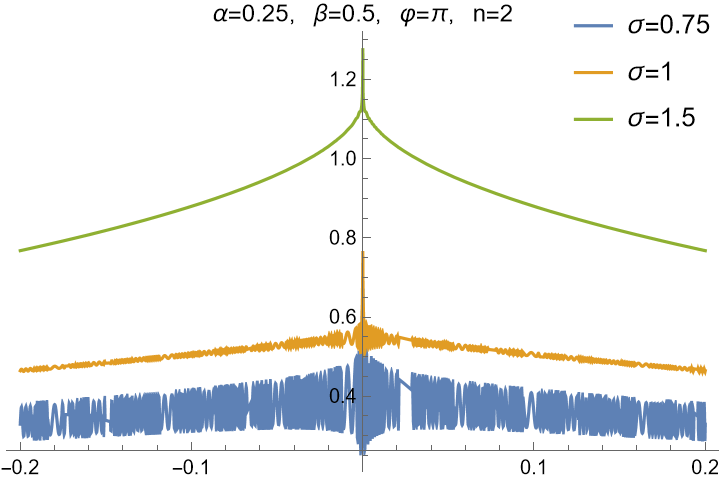}
\caption{$\alpha=0.25$, $\beta=0.5$}
\label{n211}
\end{subfigure}%
\hfill
\begin{subfigure}{.48\textwidth}
  \centering
  \includegraphics[width=1\linewidth]{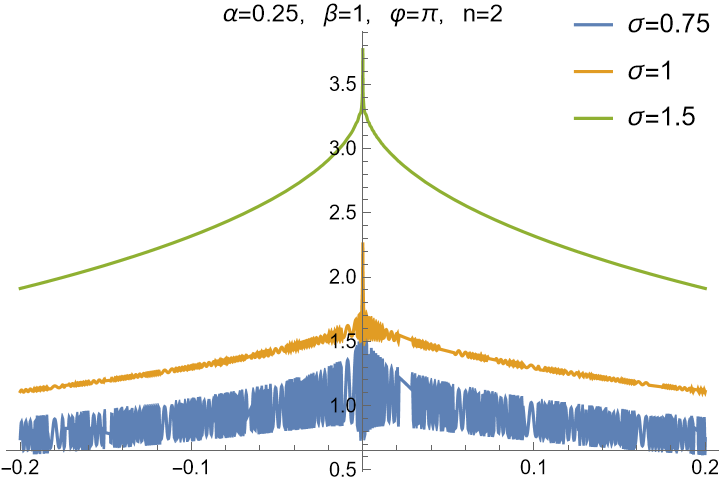}
\caption{$\alpha=0.25$, $\beta=1$}
\label{n212}
\end{subfigure}
%\caption{A figure with two subfigures}
%\label{112}
\end{figure}
\unskip
%---------------------------------------------
%---------------------------------------------
\begin{figure}[H]
\ContinuedFloat
\centering
\begin{subfigure}{.48\textwidth}
\centering
  \includegraphics[width=1\linewidth]{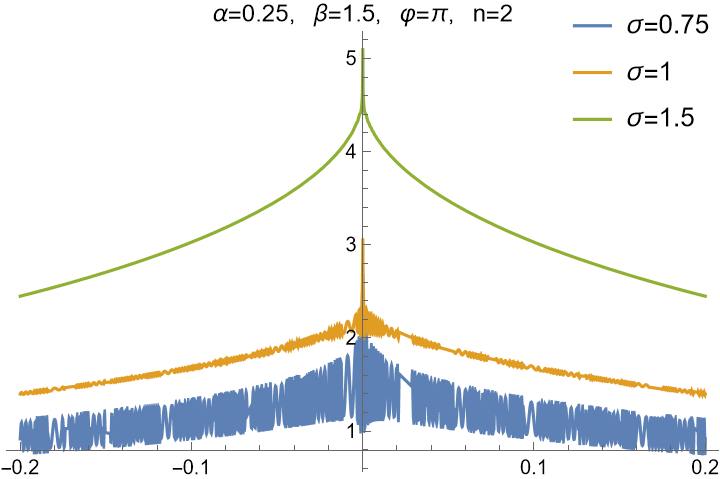}
\caption{$\alpha=0.25$, $\beta=1.5$}
\label{n213}
\end{subfigure}%
\hfill
\begin{subfigure}{.48\textwidth}
\centering
  \includegraphics[width=1\linewidth]{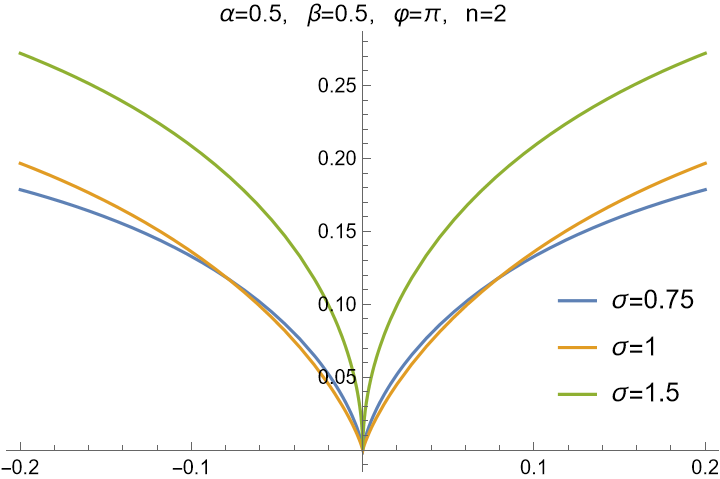}
%  \captionsetup{labelformat=empty}
\caption{$\alpha=0.5$, $\beta=0.5$}
\label{n221}
\end{subfigure}
%\caption{A figure with two subfigures}
%\label{111213}
\end{figure}
\unskip
%---------------------------------------------
%---------------------------------------------
\begin{figure}[H]
\ContinuedFloat
\centering
\begin{subfigure}{.48\textwidth}
\centering
  \includegraphics[width=1\linewidth]{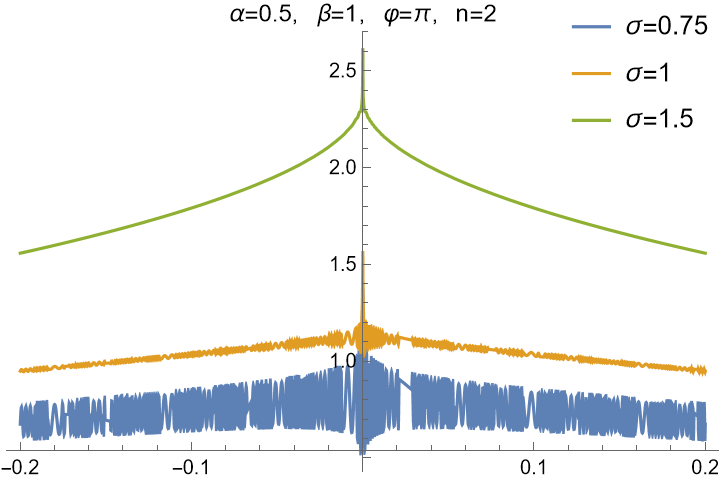}
\caption{$\alpha=0.5$, $\beta=1$}
\label{n222}
\end{subfigure}%
\hfill
\begin{subfigure}{.48\textwidth}
\centering
  \includegraphics[width=1\linewidth]{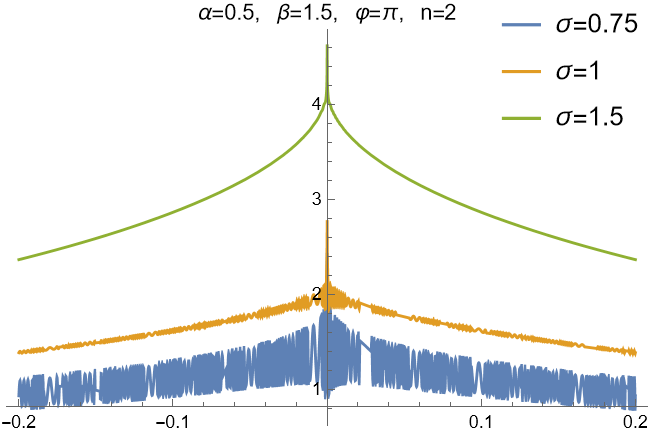}
%  \captionsetup{labelformat=empty}
\caption{$\alpha=0.5$, $\beta=1.5$}
\label{n223}
\end{subfigure}
%\caption{A figure with two subfigures}
%\label{111213}
\end{figure}
\unskip
%---------------------------------------------
%---------------------------------------------
\begin{figure}[H]
\ContinuedFloat
\centering
\begin{subfigure}{.48\textwidth}
\centering
  \includegraphics[width=1\linewidth]{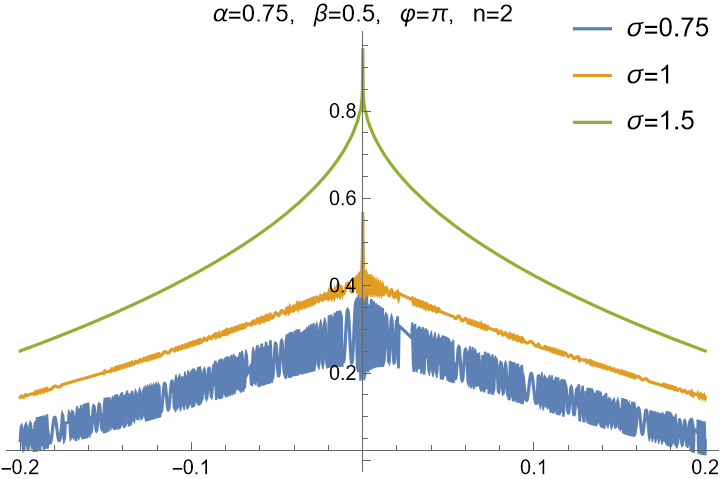}
\caption{$\alpha=0.75$, $\beta=0.5$}
\label{n231}
\end{subfigure}%
\hfill
\begin{subfigure}{.48\textwidth}
  \centering
  \includegraphics[width=1\linewidth]{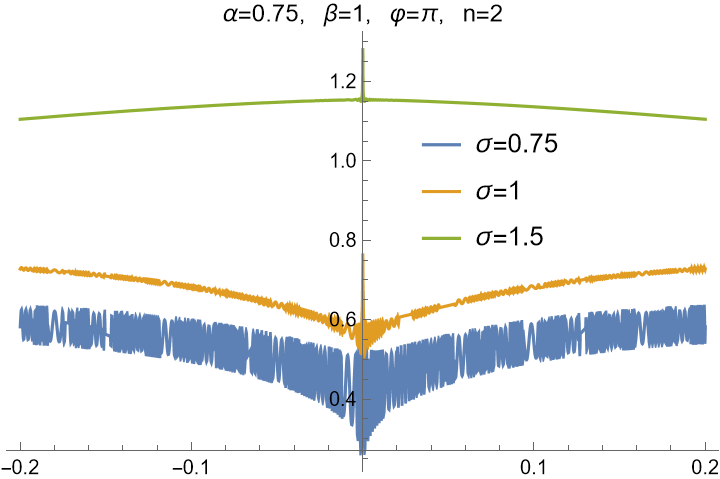}
%  \captionsetup{labelformat=empty}
\caption{$\alpha=0.75$, $\beta=1$}
\label{n232}
\end{subfigure}
%\caption{A figure with two subfigures}
%\label{111213}
\end{figure}
\unskip
%---------------------------------------------
%---------------------------------------------
\begin{figure}[H]
\ContinuedFloat
\centering
\begin{subfigure}{.48\textwidth}
\centering
  \includegraphics[width=1\linewidth]{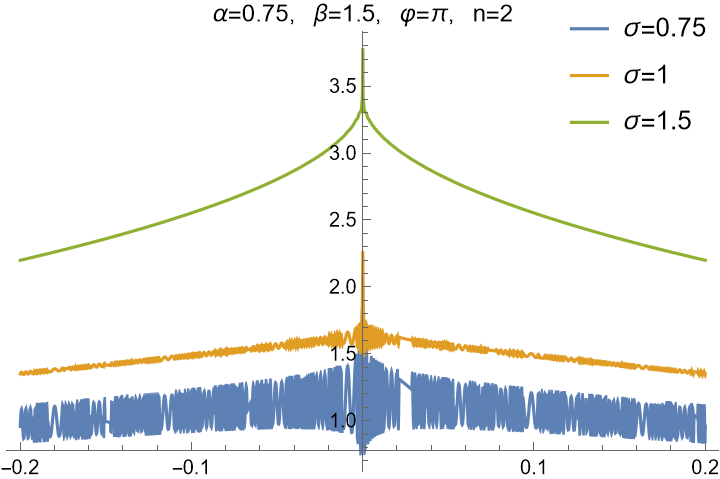}
\caption{$\alpha=0.75$, $\beta=1.5$}
\label{n233}
\end{subfigure}%
\vspace{1 cm}
\captionsetup{labelformat=empty}
\caption{Figures (2-i) through (2-ix) show $|\xi|^{n-\sigma}|\mathcal{F}
E_{\alpha,\beta}(- |\cdot|^{\sigma})(\xi)|$
near the origin in the two-dimensional case.}
\end{figure}
\unskip
%%-----------------------------
%%-----------------------------
\begin{figure}[H]
\centering
\begin{subfigure}{.48\textwidth}
  \centering
  \includegraphics[width=1\linewidth]{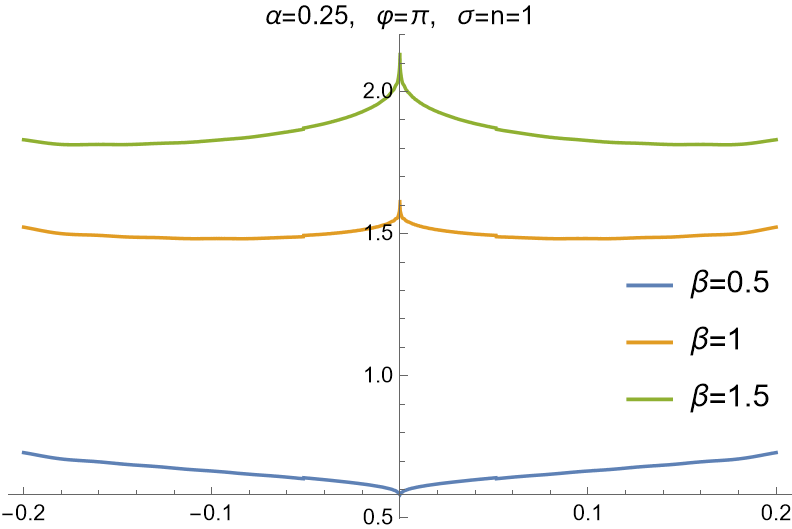}
\caption{$\alpha=0.25$, $\sigma=n=1$}
\label{n1e1}
\end{subfigure}%
\hfill
\begin{subfigure}{.48\textwidth}
  \centering
  \includegraphics[width=1\linewidth]{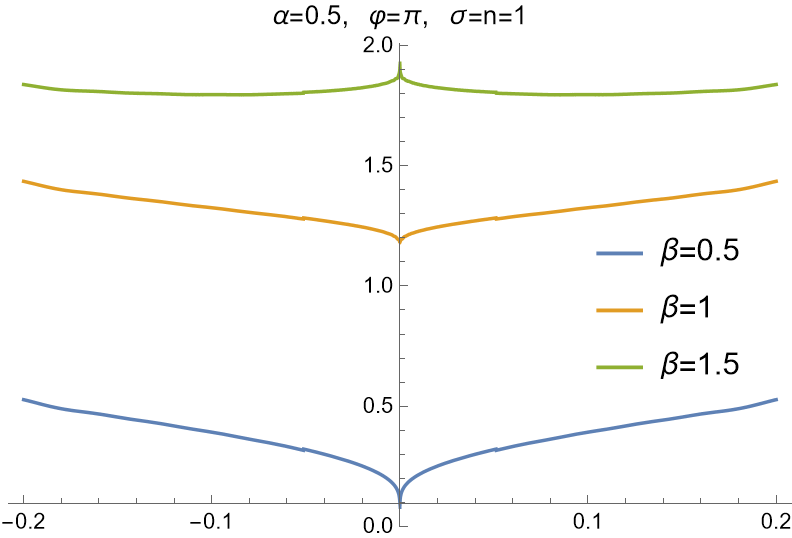}
\caption{$\alpha=0.5$, $\sigma=n=1$}
\label{n1e2}
\end{subfigure}
%\caption{A figure with two subfigures}
%\label{112}
\vspace{- 2cm}
\end{figure}
%---------------------------------------------
%---------------------------------------------
\begin{figure}[H]
\ContinuedFloat
\centering
\begin{subfigure}{.48\textwidth}
\centering
  \includegraphics[width=1\linewidth]{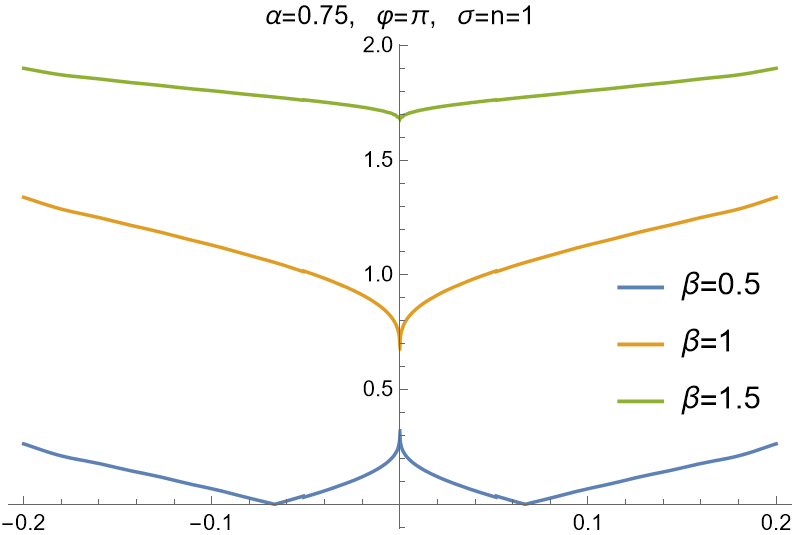}
\caption{$\alpha=0.75$, $\sigma=n=1$}
\label{n1e3}
\end{subfigure}%
\vspace{1 cm}
\captionsetup{labelformat=empty}
\caption{Figures (3-i) through (3-iii) show $|\mathcal{F}
E_{\alpha,\beta}(- |\cdot|^{\sigma})(\xi)|/|\log{|\xi|}|$
near the origin in the one-dimensional case.}
%\label{111213}
\end{figure}
\begin{figure}[H]
\centering
\begin{subfigure}{.48\textwidth}
\centering
  \includegraphics[width=1\linewidth]{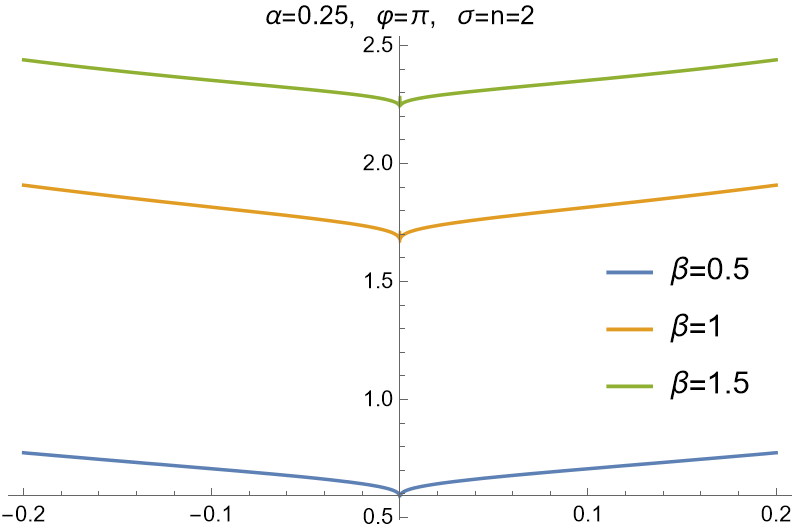}
\caption{$\alpha=0.25$, $\sigma=n=2$}
\label{n2e1}
\end{subfigure}%
\hfill
\begin{subfigure}{.48\textwidth}
\centering
  \includegraphics[width=1\linewidth]{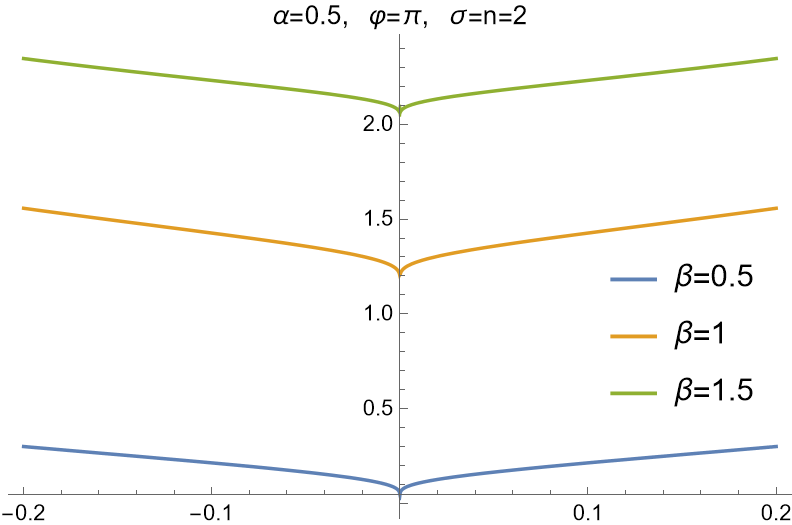}
%  \captionsetup{labelformat=empty}
\caption{$\alpha=0.5$, $\sigma=n=2$}
\label{n2e2}
\end{subfigure}
%\caption{A figure with two subfigures}
%\label{111213}
\end{figure}
\vspace{-1 cm}
%---------------------------------------------
%---------------------------------------------
\begin{figure}[H]
\ContinuedFloat
\centering
\begin{subfigure}{.48\textwidth}
\centering
  \includegraphics[width=1\linewidth]{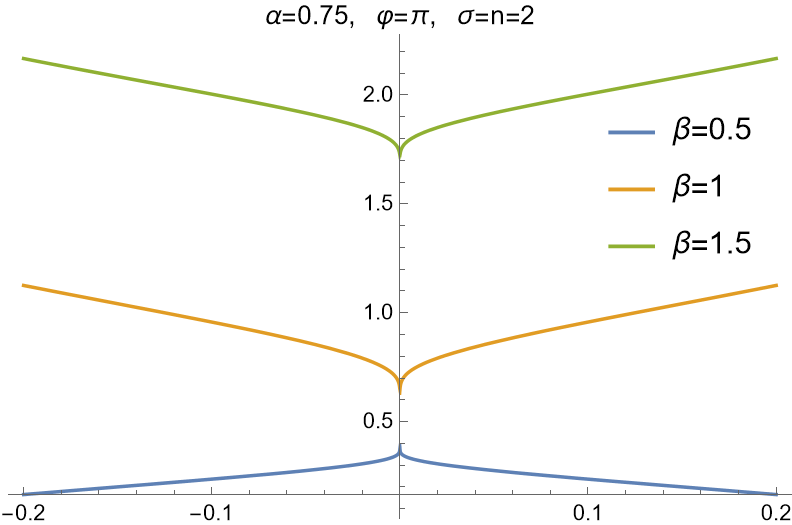}
%  \captionsetup{labelformat=empty}
\caption{$\alpha=0.75$, $\sigma=n=2$}
\label{n2e3}
\end{subfigure}
\vspace{1 cm}
\captionsetup{labelformat=empty}
\caption{Figures (4-i) through (4-iii) show $|\mathcal{F}
E_{\alpha,\beta}(- |\cdot|^{\sigma})(\xi)|/|\log{|\xi|}|$
near the origin in the two-dimensional case.}
%\label{111213}
\end{figure}
%%%%%%%%%%%%%%%%%%%%%%%%%%%%%%%%%%%%%%%%%%%%%%%%%%%%%%
%\begin{acknowledgements}
%The author is grateful to the editor and the reviewers for their valuable comments on an earlier version of the manuscript.
%\end{acknowledgements}
%\section*{\normalsize Declarations}
%\small{\textbf{Funding/}\textbf{Conflicts of interests}}\\
%There are no conflicts of interest to declare.
%No funds or grants  were received.

%\section*{\small Employment}
%No recent, present or anticipated employer may gain or lose financially through the publication of this manuscript.
%\section*{\small Financial/Non-Financial interests}
%The author has no relevant financial or non-financial interests to disclose.
%\section*{\small Author's contribution}
%The author confirms sole responsibility for the preparation and content of the manuscript
% \section*{\small
% Conflict of interest} %%%%%%%%%%%%%%%%%%%%
% {\small
% The author declares no conflict of interest.}

%%%%%%%%%%%%%%%%%% REFERENCES: %%%%%%%%%%%%%%%%%%%%%%%%%%%%%%%%%%%%%%%%%%%%
%% BibTeX users: please use \bibliographystyle{spmpsci} %% for math. and phys. sci.
%% Non-BibTeX users: please use the model as below !!! %%

%%%% for FCAA - pls. include directly the Refs items here ! %%%
%%%% following STRICTLY the models below %%%%%
%%%% and ARRANGE the items in ALPHABETIC ORDER for authors' family names !!!
\bibliographystyle{spmpsci}
\bibliography{mtfrq}

@article{abdelhakim2023asymptotic,
  title={Asymptotic analysis of oscillatory
  integrals with the {Mittag}-{Leffler}
  function as an oscillatory kernel},
  author={Abdelhakim, Ahmed A},
  journal={Fractional Calculus and Applied Analysis},
volume={26},
  number={3},
  pages={1186--1205},
  year={2023},
doi={10.1007/s13540-023-00154-3},
publisher={Springer}}

@article{abdelhakim2025mittag,
  title={{Mittag}-{Leffler} functions as eigenfunctions
  of the {Hilbert} transform},
  author={Abdelhakim, Ahmed A},
  journal={Mathematical Methods in the Applied Sciences},
  volume={48},
  number={8},
  pages={9340--9343},
  year={2025},
  publisher={Wiley Online Library}
}

@article{chen2012space,
  title={{Space--time fractional diffusion on
  bounded domains}},
  author={Chen, Zhen-Qing and Meerschaert, Mark M
  and Nane, Erkan},
  journal={Journal of Mathematical Analysis and
  Applications},
  volume={393},
  number={2},
  pages={479--488},
  year={2012},
  publisher={Elsevier}
}

@article{cho2015inhomogeneous,
  title={{On inhomogeneous Strichartz estimates
  for fractional Schr{\"o}dinger
  equations and their applications}},
  author={Cho, Chu-Hee and Koh, Youngwoo and Seo,
  Ihyeok},
  journal={Discrete and Continuous Dynamical
  Systems},
  volume={36},
  number={4},
  pages={1905--1926},
  year={2015},
  publisher={Discrete and Continuous Dynamical
  Systems}
}

@book{comtet2012advanced,
  title={{Advanced} {Combinatorics}: {The}
   {Art} of {Finite} and {Infinite} {Expansions}},
  author={Comtet, Louis},
  year={2012},
  publisher={Springer Science \& Business Media}
}

@article{diethelm2002analysis,
  title={Analysis of fractional differential equations},
  author={Diethelm, Kai and Ford, Neville J},
  journal={Journal of Mathematical Analysis and Applications},
  volume={265},
  number={2},
  pages={229--248},
  year={2002},
  publisher={Elsevier}
}

@article{dong2008space,
  title={{Space--time fractional Schr{\"o}dinger
  equation with time-independent potentials}},
  author={Dong, Jianping and Xu, Mingyu},
  journal={Journal of Mathematical Analysis and
  Applications},
  volume={344},
  number={2},
  pages={1005--1017},
  year={2008},
  publisher={Elsevier}
}

@article{dzhrbashyan1954integral,
  title={On the integral representation of
  functions continuous on several rays
  (generalization of the {Fourier} integral)},
  author={Dzhrbashyan, Mkhitar Mkrtichevich},
  journal={Izvestiya Rossiiskoi Akademii Nauk.
  Seriya Matematicheskaya},
  volume={18},
  number={5},
  pages={427--448},
  year={1954},
  publisher={Russian Academy of Sciences,
  Steklov Mathematical Institute of Russian~…}
}

@article{garrappa2017time,
  title={On the time-fractional
  { Schr{\"o}dinger} equation: {Theoretical analysis
  and numerical solution by matrix {Mittag}-{Leffler} functions}},
  author={Garrappa, Roberto and Moret, Igor and Popolizio, Marina},
  journal={Computers \& Mathematics with Applications},
  volume={74},
  number={5},
  pages={977--992},
  year={2017},
  publisher={Elsevier}
}

@article{gorenflo2002computation,
  title={{Computation of the {Mittag}-{Leffler}
  function $E_{\alpha,\beta}(z)$ and its
  derivative}},
  author={Gorenflo, Rudolf and Loutchko, Joulia
  and Luchko, Yuri},
  journal={Fractional Calculus and Applied
  Analysis},
  volume={5},
  number={4},
  pages={491--518},
  year={2002},
  publisher={Institute Of Mathematics and
  Informatics, Bulgarian Academy Of Sciences}
}

@book{gorenflo2020mittag,
  title={{{Mittag}-{Leffler} {Functions}, {Related}
  {Topics} and {Applications}}},
  author={Gorenflo, Rudolf and Kilbas,
  Anatoly A and Mainardi,
  Francesco and Rogosin, Sergei V},
  year={2020},
  publisher={Springer}
}

@book{grafakos2008classical,
  title={Classical {Fourier} {Analysis}},
  author={Grafakos, Loukas and others},
  volume={2},
  year={2008},
  publisher={Springer}
}

@article{grande2019space,
  title={Space-time fractional
  nonlinear {Schr\"{o}dinger} equation},
  author={Grande, Ricardo},
  journal={SIAM Journal on Mathematical
  Analysis},
  volume={51},
  number={5},
  pages={4172--4212},
  year={2019},
  publisher={SIAM}
}

@article{gupta2007some,
  title={Some properties of the {Mittag-Leffler} functions},
  author={Gupta, Indranil Sen and Debnath, Lokenath},
  journal={Integral Transforms and Special Functions},
  volume={18},
  number={5},
  pages={329--336},
  year={2007},
  publisher={Taylor \& Francis}
}

@article{huang2005space,
  title={The space-time fractional diffusion
  equation with {Caputo} derivatives},
  author={Huang, Fenghui and Liu, Fawang},
  journal={Journal of Applied Mathematics and Computing},
  volume={19},
  number={1},
  pages={179--190},
  year={2005},
  publisher={Springer}
}

@article{kemppainen2017representation,
  title={{Representation of solutions and
  large-time behavior for fully
  nonlocal diffusion equations}},
  author={Kemppainen, Jukka and Siljander, Juhana
  and Zacher, Rico},
  journal={Journal of Differential Equations},
  volume={263},
  number={1},
  pages={149--201},
  year={2017},
  publisher={Elsevier}
}

@book{kilbas2006theory,
  title={Theory and {Applications}
  of {Fractional} {Differential} {Equations}},
  author={Kilbas, Anatoli{\u\i}
  Aleksandrovich and Srivastava,
  Hari M and Trujillo, Juan J},
  volume={204},
  year={2006},
  publisher={Elsevier}
}

@article{lee2020strichartz,
  title={{Strichartz} estimates for
  space-time fractional
   {Schr{\"o}dinger} equations},
  author={Lee, Jin Bong},
  journal={Journal of Mathematical Analysis and
  Applications},
  volume={487},
  number={2},
  pages={123999},
  year={2020},
  publisher={Elsevier}
}

@article{li2022applicationi,
  title={{Application of capacities to space--time
  fractional dissipative equations I: regularity
  and the blow-up set}},
  author={Li, Pengtao and Zhai, Zhichun},
  journal={Canadian Journal of Mathematics},
  pages={1--53},
  year={2022},
  publisher={Canadian Mathematical Society}
}

@article{li2022applicationii,
  title={{Application of capacities to space-time
  fractional dissipative equations II:
  Carleson measure characterization for
  $L^{q}(\mathbb{R}^{n+1}_{+},\mu)$--
  extension}},
  author={Li, Pengtao and Zhai, Zhichun},
  journal={Advances in Nonlinear Analysis},
  volume={11},
  number={1},
  pages={850--887},
  year={2022},
  publisher={De Gruyter}
}

@article{mainardi2001fundamental,
  title={{The fundamental solution of the
  space-time fractional diffusion equation}},
  author={Mainardi, Francesco and Luchko, Yuri and
  Pagnini, Gianni},
  journal={Fractional Calculus and Applied
  Analysis},
  volume={4},
  number={2},
  pages={153--192},
  year={2001}
}

@article{mainardi2003wright,
  title={The {Wright} functions as solutions of
  the time-fractional diffusion equation},
  author={Mainardi, Francesco and Pagnini, Gianni},
  journal={Applied Mathematics and Computation},
  volume={141},
  number={1},
  pages={51--62},
  year={2003},
  publisher={Elsevier}
}

@book{podlubny1998fractional,
  title={Fractional {Differential}
  {Equations}: {An} {Introduction} to {Fractional}
  {Derivatives}, {Fractional} {Differential} {Equations},
  to {Methods} of {Their} {Solution} and {Some} of {Their}
  {Applications}},
  author={Podlubny, Igor},
  year={1998},
  publisher={Elsevier}
}

@book{stein1993harmonic,
  title={{Harmonic {Analysis}: {{Real}-{Variable}
  {Methods},
  {Orthogonality}, and {Oscillatory} {Integrals}}}},
  author={Stein, Elias M and Murphy, Timothy S},
  volume={3},
  year={1993},
  publisher={Princeton University Press}
}

@article{su2019local,
  title={{Local well-posedness of semilinear
  space-time fractional Schr{\"o}dinger
  equation}},
  author={Su, Xiaoyan and Zhao, Shiliang and Li,
  Miao},
  journal={Journal of Mathematical Analysis and
  Applications},
  volume={479},
  number={1},
  pages={1244--1265},
  year={2019},
  publisher={Elsevier}
}

@article{su2021dispersive,
  title={Dispersive estimates for time and space
  fractional Schr{\"o}dinger equations},
  author={Su, Xiaoyan and Zhao, Shiliang and Li,
  Miao},
  journal={Mathematical Methods in the Applied
  Sciences},
  volume={44},
  number={10},
  pages={7933--7942},
  year={2021},
  publisher={Wiley Online Library}
}

@article{wang2007generalized,
  title={{Generalized fractional Schr{\"o}dinger
  equation with space-time fractional
  derivatives}},
  author={Wang, Shaowei and Xu, Mingyu},
  journal={Journal of mathematical physics},
  volume={48},
  number={4},
  year={2007},
  publisher={AIP Publishing}
}

@book{watson1922treatise,
  title={A {Ttreatise} on the {Theory} of {{Bessel}} {Functions}},
  author={Watson, George Neville},
  volume={3},
  year={1922},
  publisher={The University Press}
}

\bigskip  %%%%%%%%%%%%%%%%%%%%%%%%%%%%%%%%%

%\small %%%
%\noindent
%{\bf Publisher's Note}
%Springer Nature remains neutral with regard to jurisdictional claims in published maps and institutional affiliations.
%

\end{document}